\numberwithin{equation}{section}
\newcommand{\I}{\mathbb I}
\newcommand{\reg}{\mathrm{e}}
\newtheorem{thm}{Theorem}[section]
\newtheorem{pro}[thm]{Proposition}
\newtheorem{lem}[thm]{Lemma}
\newtheorem{cor}[thm]{Corollary}
\newcommand{\Tee }{\mathcal T}
\begin{document}


\title[I-FAVORABLE SPACES: REVISITED]
{I-FAVORABLE SPACES: REVISITED}

\author{Vesko  Valov}
\address{Department of Computer Science and Mathematics, Nipissing University,
100 College Drive, P.O. Box 5002, North Bay, ON, P1B 8L7, Canada}
\email{veskov@nipissingu.ca}
\thanks{Research supported in part by NSERC Grant 261914-13}

\keywords{compact spaces, continuous inverse systems, $\mathrm
I$-favorable spaces, skeletal maps}

\subjclass{Primary 54C10; Secondary 54F65}


\begin{abstract}
The aim of this paper is to extend the external characterization of $\mathrm I$-favorable spaces obtained in \cite{vv}.
This allows us to obtain a characterization of compact $\mathrm I$-favorable spaces in terms of quasi $\kappa$-metrics.
We also provide proofs of some author's results announced in \cite{v}.
\end{abstract}

\maketitle

\markboth{}{I-favorable spaces}



\section{Introduction}

The aim of this paper is to extend the external characterization of $\mathrm I$-favorable spaces obtained in \cite{vv}.
We also provide proofs of some author's results announced in \cite{v}.
All topological spaces are Tychonoff
and the single-valued maps are continuous. 

P. Daniels, K. Kunen and H. Zhou \cite{dkz} introduced the so called
{\em open-open game}: Two players take countably many turns, a round consists of player I choosing a non-empty open set $U\subset X$ and II choosing
a non-empty open set $V\subset U$. Player I wins  if the union of II's open sets is dense in $X$, otherwise II wins.
A space $X$ is called {\em
$\mathrm{I}$-favorable} if player I has
a winning strategy. This means, see \cite{kp1}, there exists a
function $\sigma:\bigcup_{n\geq 0}\Tee_X^n\to\Tee_X$ such that the
union $\bigcup_{n\geq 0}U_n$ is dense in $X$
for each game
$$\big(\sigma(\varnothing),U_0,\sigma(U_0),U_1,\sigma(U_0,U_1),U_2,...,U_n,\sigma(U_0,U_1,..,U_n),U_{n+1},,,\big),$$
where all $U_k$ and $\sigma(\varnothing)$ are non-empty open sets in $X$, $U_0\subset\sigma(\varnothing)$ and
$U_{k+1}\subset\sigma(U_0,U_1,..,U_k)$ for every $k\geq 0$ (here $\Tee_X$ is the topology of $X$).

Recently A. Kucharski and S. Plewik (see \cite{kp1}, \cite{kp2})
 investigated the connection of $\mathrm{I}$-favorable
spaces and skeletal maps. In particular, they proved in \cite{kp2}
that the class of compact $\mathrm{I}$-favorable spaces and the
skeletal maps are adequate in the sense of E. Shchepin \cite{sc1}. Recall that a map $f:X\to Y$ is skeletal if $\mathrm{Int}\overline{f(U)}\neq\varnothing$) for every open $U\subset X$.
On the other hand, the author announced \cite[Theorem 3.1]{v} a
characterization of the spaces $X$ such that there is an inverse
system $\displaystyle S=\{X_\alpha, p^{\beta}_\alpha, A\}$ of
separable metric spaces $X_\alpha$ and skeletal surjective bounding maps $p^{\beta}_\alpha$
satisfying the following conditions:
$(1)$ the index set $A$ is $\sigma$-complete (every countable chain in $A$ has a supremum in $A$);
$(2)$ for every countable chain $\{\alpha_n\}_{n\geq 1}\subset A$ with $\beta=\sup\{\alpha_n\}_{n\geq 1}$ the space $X_\beta$ is a (dense)
subset of $\displaystyle\lim_\leftarrow\{X_{\alpha_n},p^{\alpha_{n+1}}_{\alpha_n}\}$;
$(3)$ $X$ is embedded in $\displaystyle\lim_\leftarrow
S$ and $p_\alpha(X)=X_\alpha$ for each $\alpha$, where $p_\alpha\colon\displaystyle\lim_\leftarrow S\to X_\alpha$ is the $\alpha$-th limit projection.
An inverse system satisfying $(1)$ and $(2)$ is called {\em almost $\sigma$-continuous}. If condition $(3)$ is satisfied, we say that $X$ is the {\em almost limit of $S$}, notation $X=\mathrm{a}-\displaystyle\lim_\leftarrow S$. Spaces $X$ such that  $X=\mathrm{a}-\displaystyle\lim_\leftarrow S$, where $S$ is almost $\sigma$-continuous inverse system with skeletal bounding maps and second countable spaces, are called {\em skeletally generated} \cite{vv}.

The following theorem is our first main result:
\begin{thm}
For a space $X$ the following conditions are equivalent:
\begin{itemize}
\item[(1)] $X$ is $\mathrm{I}$-favorable;
\item[(2)] Every embedding of $X$ in another space $Y$ is $\pi$-regular;
\item[(3)] $X$ is skeletally generated.
\end{itemize}
\end{thm}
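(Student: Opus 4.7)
The plan is to prove the cycle $(3)\Rightarrow(1)\Rightarrow(2)\Rightarrow(3)$, using the external characterization of $\mathrm{I}$-favorability from \cite{vv} as the bridge between the game-theoretic condition (1), the embedding condition (2), and the inverse-limit description (3). The compact case of $(1)\Leftrightarrow(3)$ essentially follows from Kucharski--Plewik \cite{kp2}; the new input here is to extract the inverse system directly from $\pi$-regularity of embeddings, thus bypassing compactness.

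For $(3)\Rightarrow(1)$, assume $X=\mathrm{a}\text{-}\lim_\leftarrow S$ with $S=\{X_\alpha,p^\beta_\alpha,A\}$ as in the definition. I would define a winning strategy $\sigma$ by bookkeeping on countable subsets of $A$: after plays $U_0,\dots,U_n$, maintain a countable $A_n\subset A$ and its supremum $\alpha_n\in A$ (available by $\sigma$-completeness) such that each $U_k$ is the preimage under $p_{\alpha_n}$ of an open subset of $X_{\alpha_n}$; using a fixed countable $\pi$-base of $X_{\alpha_n}$, let $\sigma$ force the opponent to exhaust every basic open set. Taking $\beta=\sup_n\alpha_n\in A$, condition (2) on $S$ makes $X_\beta$ dense in $\displaystyle\lim_\leftarrow\{X_{\alpha_n},p^{\alpha_{n+1}}_{\alpha_n}\}$, so $\bigcup_n p_\beta(U_n)$ is dense in $X_\beta$; skeletality of $p_\beta$, inherited from the bounding maps, then propagates density back to $X$.

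For $(1)\Rightarrow(2)$, given an embedding $e\colon X\hookrightarrow Y$, I would lift the winning strategy $\sigma$ to a selector on open subsets of $Y$ meeting $X$, assigning to each such $V\subset Y$ a nonempty open $V'\subset V$ whose trace on $X$ lies in $\sigma$ applied to the appropriate history; iterating produces in $Y$ a $\pi$-network whose traces form a $\pi$-base of $X$ in a skeletal fashion, which is exactly the $\pi$-regularity of \cite{vv}. For $(2)\Rightarrow(3)$, embed $X$ into a Tychonoff cube $\I^\tau$; by hypothesis this embedding is $\pi$-regular. Let $A$ be the family of countable $B\subset\tau$ for which the subproduct projection of $X$ onto $X_B:=\pi_B(X)$ is skeletal; one then verifies that $A$ is cofinal and $\sigma$-complete, that the bonding maps $X_C\to X_B$ are skeletal for $B\subset C$ in $A$, and that almost $\sigma$-continuity holds, realising $X$ as an almost limit of the required form.

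The main obstacle I expect lies in $(2)\Rightarrow(3)$: arbitrary subproduct projections of $X\subset\I^\tau$ need not be skeletal, so the lattice of admissible countable index sets must be pruned using the $\pi$-regular selector, and one must then verify that this pruned lattice remains $\sigma$-complete and cofinal in $[\tau]^{\leq\omega}$. The delicate part is to show that the bonding maps between pruned coordinates are genuinely skeletal (not merely with dense image) and that countable suprema stay inside $A$; this is where the bookkeeping driven by $\pi$-regularity becomes essential.
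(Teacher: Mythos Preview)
Your sketch of $(3)\Rightarrow(1)$ is in the right spirit and aligns with what the paper does: it simply cites \cite[Theorem~3.3]{ku1} (or \cite[Theorem~13]{kp1}) for the fact that an almost $\sigma$-continuous system of second-countable spaces with skeletal bonding maps has $\mathrm I$-favorable almost limit. Your outline of $(2)\Rightarrow(3)$ is also close to the paper's approach, which embeds $X$ in a Tychonoff cube and invokes Proposition~2.8 (whose proof is the ``admissible countable index sets'' argument you describe, carried out in \cite{vv}); in fact this step is \emph{not} the main obstacle here.

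The real gap is in $(1)\Rightarrow(2)$. What you describe does not construct a $\pi$-regular operator $\mathrm e\colon\Tee_X\to\Tee_Y$. Your ``selector'' goes the wrong way (from open sets of $Y$ to smaller open sets of $Y$), and producing a $\pi$-network in $Y$ whose traces give a $\pi$-base of $X$ is far weaker than $\pi$-regularity: the crucial requirement is the \emph{disjointness} condition $\mathrm e(U)\cap\mathrm e(V)=\varnothing$ whenever $U\cap V=\varnothing$, and nothing in a winning strategy, nor in a lifted selector, gives that for free. The paper's route is entirely different and carries essentially all of the technical weight of the theorem. One first passes to the compactification $\widetilde X=\overline{X}^{\beta Y}$, which is still $\mathrm I$-favorable, and then proves by transfinite induction on $w(\widetilde X)$ that every compact $\mathrm I$-favorable space is $\pi$-regularly embedded in any superspace (Proposition~2.7). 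The inductive step uses Proposition~2.2 to write $\widetilde X$ as a continuous inverse limit of lower-weight $\mathrm I$-favorable compacta with skeletal projections, and then builds $\mathrm e$ via Shchepin-style \emph{rank} machinery (Lemmas~2.3--2.5, Proposition~2.6): one takes a $\pi$-base of sets $U$ with finite rank $q(U)$, defines $\Gamma(U)$ as a product of the sets $\mathrm{Int}\,\overline{p_\alpha(U)}$ over the finitely many indices in $\Omega(U)$, and proves that disjoint $U_1,U_2$ yield disjoint $\Gamma(U_1),\Gamma(U_2)$ by locating a common index $\beta\in\Omega(U_1)\cap\Omega(U_2)$ beyond which the ranks no longer overlap. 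This disjointness argument is the heart of the matter, and your proposal does not touch it. Finally, note that citing \cite{vv} directly does not close the gap: the version there assumes $C^*$-embeddability of $X$ in $Y$ and works with co-zero sets, and removing those hypotheses is precisely the content of the present paper.
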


Here, we say that a subspace $X\subset Y$ is {\em $\pi$-regularly
embedded} in $Y$ \cite{v} if there exists a function
 $\reg\colon\Tee_X\to\Tee_Y$ such that for every $U,V\in\Tee_X$ we have:
 (i) $\reg(U)\cap\reg(V)=\varnothing$ provided $U\cap
V=\varnothing$; (ii) $\reg(U)\cap X$ is a dense subset of $U$.
 If, $\reg(U)\cap X=U$, we say that $X$ is {\em regularly embedded in $Y$}. An external characterization of $\kappa$-metrizable
 compacta, similar to condition (2), was established in \cite{s}.
\begin{cor}
Every $\mathrm I$-favorable subset of an extremally disconnected space
is also extremally disconnected.
\end{cor}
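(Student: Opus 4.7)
The plan is to apply the equivalence $(1) \Leftrightarrow (2)$ of the preceding theorem: since $X$ is $\mathrm{I}$-favorable, any embedding $X\subset Y$ must be $\pi$-regular. So I fix the extremally disconnected ambient $Y$ together with a function $\reg\colon\Tee_X\to\Tee_Y$ satisfying the two defining conditions of $\pi$-regularity, and work with it.

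Recall that a space is extremally disconnected iff disjoint open sets have disjoint closures. So I would take two disjoint open sets $U,V\subset X$ and aim to show $\overline{U}^{\,X}\cap\overline{V}^{\,X}=\varnothing$. From $U\cap V=\varnothing$ and condition (i) of $\pi$-regularity, I get $\reg(U)\cap\reg(V)=\varnothing$ in $Y$. Now I would invoke the standard fact that in an extremally disconnected space disjoint open sets have disjoint closures (if $G,H$ are open and disjoint, then $H\subset Y\setminus\overline{G}$, and since $\overline{G}$ is open, $\overline{H}\cap\overline{G}=\varnothing$). Applied to $\reg(U)$ and $\reg(V)$, this yields $\overline{\reg(U)}^{\,Y}\cap\overline{\reg(V)}^{\,Y}=\varnothing$.

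To conclude, I need to transfer these disjoint closures in $Y$ back to $X$. Condition (ii) of $\pi$-regularity says $\reg(U)\cap X$ is dense in $U$, hence
\[
\overline{U}^{\,X}\subset\overline{\reg(U)\cap X}^{\,X}\subset\overline{\reg(U)}^{\,Y}\cap X,
\]
and analogously for $V$. Intersecting the two inclusions gives
\[
\overline{U}^{\,X}\cap\overline{V}^{\,X}\subset\overline{\reg(U)}^{\,Y}\cap\overline{\reg(V)}^{\,Y}\cap X=\varnothing,
\]
which is exactly what is needed.

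The argument is short and the only conceptual step is the transfer through $\reg$; I do not expect a serious obstacle, since the bridge between $X$ and $Y$ is handed to us by the theorem, and the extremal disconnectedness of $Y$ is used only through the elementary observation that in such a space disjoint opens have disjoint closures. The mildly delicate point to watch is that one must use density (condition (ii)) rather than any regularity-style equality, so the inclusion $\overline{U}^{\,X}\subset\overline{\reg(U)}^{\,Y}\cap X$ is the right formulation.
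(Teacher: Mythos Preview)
Your argument is correct. Both your proof and the paper's rely on Theorem~1.1(2) to obtain a $\pi$-regular operator $\reg\colon\Tee_X\to\Tee_Y$ and then exploit that $\overline{\reg(U)}^{\,Y}$ is open in $Y$; the difference is only in which characterization of extremal disconnectedness is invoked. The paper proves directly that $\overline{U}^{\,X}$ is open by establishing the equality $\overline{U}^{\,X}=\overline{\reg(U)}^{\,Y}\cap X$ (the reverse inclusion is obtained by a short contradiction using condition~(i)), so that $\overline{U}^{\,X}$ is the trace of an open set. You instead use the equivalent formulation ``disjoint open sets have disjoint closures'', which lets you get by with only the inclusion $\overline{U}^{\,X}\subset\overline{\reg(U)}^{\,Y}\cap X$; this is slightly more economical, while the paper's version yields the stronger intermediate statement that closures in $X$ are exactly traces of closures of the $\reg$-images. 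One small remark: your parenthetical justification ``$H\subset Y\setminus\overline{G}$'' skips a step---one first needs $G\subset Y\setminus H$ closed, hence $\overline{G}\subset Y\setminus H$, and only then $H\subset Y\setminus\overline{G}$---but the conclusion is of course correct.
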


\begin{cor}
Every open subset of an $\mathrm I$-favorable space is $\mathrm I$-favorable.
\end{cor}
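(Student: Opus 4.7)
The plan is to argue directly from the definition of $\mathrm I$-favorability by a strategy-simulation argument, rather than going through Theorem 1.1. Let $\sigma$ be a winning strategy for player I in the open-open game on $X$, and fix a nonempty open $U\subset X$. Since $U$ is open in $X$, every nonempty open subset of $U$ is also a nonempty open subset of $X$, so any play in the $U$-game is automatically a sequence of legal $X$-moves. I will define a strategy $\sigma'$ for player I on $U$ by maintaining a legal virtual $\sigma$-play in $X$ in the background, padding it with extra virtual moves whenever $\sigma$'s response fails to meet $U$.

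The key step is the following lemma: for every legal finite $\sigma$-play $\pi=(\tilde V_0,\ldots,\tilde V_m)$ in $X$ such that the union $R$ of those $\tilde V_i$ which lie in $U$ is not dense in $U$, there exists a legal extension $\pi'$ of $\pi$ with $\sigma(\pi')\cap U\neq\varnothing$. To prove it, I argue by contradiction: if no such extension exists, then along any infinite continuation each $\tilde V_{m+i}$ ($i\geq 1$) must lie in $X\setminus U$, since $\sigma$'s response is then always disjoint from $U$. Because $\sigma$ wins on $X$, $\bigcup_n\tilde V_n$ is dense in $X$, hence its intersection with the open set $U$ is dense in $U$; but this intersection is precisely $R$, contradicting the hypothesis.

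Given the lemma, $\sigma'$ is constructed inductively as follows. Whenever player II plays $V_n$ in the $U$-game, append $V_n$ to the current virtual play $\pi$; if the accumulated $R$ is not yet dense in $U$, invoke the lemma to extend $\pi$ to some $\pi'$ by additional virtual moves (all forced to lie in $X\setminus U$) so that $\sigma(\pi')\cap U\neq\varnothing$, and let $\sigma'$ play $\sigma(\pi')\cap U$; otherwise play any default nonempty open subset of $U$. To verify that $\sigma'$ wins, I split into two cases: either $R$ becomes dense at some finite stage, so $\bigcup_n V_n\supset R$ is dense in $U$; or the virtual play grows to an infinite legal $\sigma$-play whose union is dense in $X$, in which case its intersection with $U$ (which equals $\bigcup_n V_n$, since every added virtual move lies in $X\setminus U$) is dense in $U$.

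The main subtlety to pin down is that the naive form of the lemma, \emph{every} legal $\sigma$-play has an extension with $\sigma$'s response meeting $U$, is actually false: player II's $U$-moves might already be dense in $U$ after finitely many rounds, so no further extension is needed and none need exist. This forces the lemma to be conditioned on $R$ not yet being dense, and correspondingly forces the construction of $\sigma'$ to include the ``already won'' branch that plays default moves in $U$.
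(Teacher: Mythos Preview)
Your argument is correct and takes a genuinely different route from the paper. The paper's proof is a two-line application of Theorem~1.1: embed $X$ $\pi$-regularly in a product $\Pi$ of lines via some operator $\mathrm e$, observe that the restriction of $\mathrm e$ to $\mathcal T_W$ is again $\pi$-regular for $W$ in $\Pi$ (since $\mathrm e(V)\cap X\subset V\subset W$ whenever $V\in\mathcal T_W$), and then invoke Proposition~2.8 together with the implication $(3)\Rightarrow(1)$ to conclude that $W$ is $\mathrm I$-favorable. Your approach, by contrast, is entirely self-contained: it works directly from the game definition via a strategy-simulation with padding, and uses none of the structural machinery (inverse systems, $\pi$-regular embeddings, Proposition~2.8) developed in the paper. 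The trade-off is the expected one: the paper's proof is short but cashes in the considerable effort already invested in Theorem~1.1, while yours is longer and more combinatorial but would survive in any context where only the open-open game is available.

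Two small points worth tightening before the argument is airtight. First, in the contradiction step of your lemma you assert that $(\bigcup_n\tilde V_n)\cap U$ ``is precisely $R$''; with your definition $R=\bigcup\{\tilde V_i:\tilde V_i\subset U\}$ this requires that each $\tilde V_i$ in $\pi$ already lie either inside $U$ or inside $X\setminus U$. That is true for the plays to which you actually apply the lemma, but it should be added as a hypothesis (or else redefine $R=(\bigcup_i\tilde V_i)\cap U$). Second, the claim that the padding moves are ``all forced to lie in $X\setminus U$'' holds only if you take the \emph{shortest} extension furnished by the lemma, stopping the first time $\sigma$'s response meets $U$; you should say so explicitly, since an arbitrary extension witnessing the lemma need not have this property.
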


A version of Theorem 1.1 was established in \cite{vv}, but we used a little bit different notions. First, we considered
$\mathrm{I}$-favorable spaces with respect to the family of co-zero sets. Also, in the definition of skeletally generated spaces we required the system $S$ to be factorizable
(i.e. for each continuous function $f$ on $X$ there exists $\alpha\in A$ and a continuous function $h$ on $X_\alpha$ with $f=h\circ p_\alpha$). Moreover, in item (2) $X$ was supposed to be $C^*$-embedded in $Y$. Corollary 1.2 was also established in \cite{vv} under the assumption of $C^*$-embedability.

 Recall that a {\em $\kappa$-metric} \cite{sc1} on a space $X$  is a non-negative function $\rho(x,C)$ of two variables, a point $x\in X$ and a canonically closed set $C\subset X$, satisfying the following axioms:
\begin{itemize}
\item[K1)] $\rho(x,C)=0$ iff $x\in C$;
\item[K2)] If $C\subset C'$, then $\rho(x,C')\leq\rho(x,C)$ for every $x\in X$;
\item[K3)] $\rho(x,C)$ is continuous function of $x$ for every $C$;
\item[K4)] $\rho(x,\overline{\bigcup C_\alpha})=\inf_{\alpha}\rho(x,C_\alpha)$ for every increasing transfinite family $\{C_\alpha\}$ of canonically closed sets in $X$.
\end{itemize}
We say that a function $\rho(x,C)$ is an {\em quasi $\kappa$-metric} on $X$ if it satisfies the axioms $K2) - K4)$ and the following one:
\begin{itemize}
\item[K$1^*)$] For any $C$ there is a dense open subset $V$ of $X\setminus C$ such that $\rho(x,C)=0$ iff $x\in X\setminus V$.
\end{itemize}
Our second result provides a characterization of compact $\mathrm I$-favorable spaces, which is similar to Shchepin's characterization (\cite{sc1}, \cite{sc2}) of openly generated compacta as compact spaces admitting a $\kappa$-metric.
\begin{thm}
A  compact space $X$ is $\mathrm I$-favorable iff $X$ is quasi $\kappa$-metrizable.
\end{thm}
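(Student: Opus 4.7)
The plan is to use Theorem~1.1 to replace ``$\mathrm{I}$-favorable'' by ``skeletally generated'', reducing the statement to the following: a compact space $X$ is skeletally generated if and only if it admits a quasi $\kappa$-metric. This parallels Shchepin's characterization of openly generated compacta (\cite{sc1},~\cite{sc2}), with the weakening from open to skeletal bonding maps matching the replacement of axiom $K1)$ by $K1^*)$.

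For the direction $(\Rightarrow)$, assume $X$ is the almost limit of an almost $\sigma$-continuous inverse system $S=\{X_\alpha,p^\beta_\alpha,A\}$ of second countable compacta with skeletal surjective bonding maps. Fix metrics $d_\alpha\leq 1$ on each $X_\alpha$. For a canonically closed $C\subset X$ I would select a minimal index $\alpha(C)\in A$ for which $\mathrm{Int}\,C$ coincides, modulo a nowhere dense set, with $p_{\alpha(C)}^{-1}(W_C)$ for some canonically open $W_C\subset X_{\alpha(C)}$; the existence of such an $\alpha(C)$ is supplied by the countable approximability built into almost $\sigma$-continuity of $S$. Define
\[
\rho(x,C)\;=\;d_{\alpha(C)}\bigl(p_{\alpha(C)}(x),\,\overline{W_C}\bigr).
\]
Axioms $K2)$ and $K3)$ then follow immediately from continuity and monotonicity of distance to closed sets; $K4)$ reduces, by a standard closing-off argument, to the fact that the assignment $C\mapsto\alpha(C)$ stabilizes cofinally along increasing transfinite families, which is again delivered by almost $\sigma$-continuity; and $K1^*)$ is precisely what skeletality of $p_{\alpha(C)}$ produces, with $V=X\setminus p_{\alpha(C)}^{-1}(\overline{W_C})$ open, dense in $X\setminus C$, and satisfying $\rho(x,C)=0\Leftrightarrow x\notin V$.

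For $(\Leftarrow)$, given a quasi $\kappa$-metric $\rho$ on a compact space $X$, I would build an inverse system witnessing skeletal generation of $X$. Let $A$ consist of countable families $\mathcal{F}$ of regular open subsets of $X$ that are closed under finite Boolean operations and under the operation of adjoining, for each $U\in\mathcal{F}$, the dense open subset of $U$ produced by $K1^*)$ applied to the canonically closed set $X\setminus U$. For $\mathcal{F}\in A$ let $X_\mathcal{F}$ be the second countable compact quotient identifying points not separated by members of $\mathcal{F}$, with bonding maps the natural projections. Directedness and $\sigma$-completeness of $A$ are routine, $\sigma$-continuity of the resulting system over countable chains follows from $K4)$, and skeletality of the bonding maps is forced by the $K1^*)$-closure condition imposed on the members of $A$; Theorem~1.1 then yields $\mathrm{I}$-favorability.

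The main obstacle I expect is verifying skeletality of the bonding maps in $(\Leftarrow)$: axiom $K1^*)$ supplies only pointwise information attached to each canonically closed $C$, so assembling these witnesses into genuine skeletality of the quotient maps $X_\mathcal{G}\to X_\mathcal{F}$ for $\mathcal{F}\subset\mathcal{G}$ requires careful bookkeeping to guarantee that the $K1^*)$-witnesses remain inside the family at every stage of the construction. A secondary technical point in $(\Rightarrow)$ is ensuring that the choices of $\alpha(C)$ and $W_C$ can be arranged coherently enough for $K4)$ to hold along transfinite, not merely countable, chains of canonically closed sets.
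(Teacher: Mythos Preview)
Your $(\Rightarrow)$ direction has a genuine gap that is more serious than the ``secondary technical point'' you flag. The quantity $\rho(x,C)=d_{\alpha(C)}\bigl(p_{\alpha(C)}(x),\overline{W_C}\bigr)$ is computed in a metric space that varies with $C$, so there is no mechanism whatsoever to enforce $K2)$: if $C\subset C'$ then $\alpha(C)$ and $\alpha(C')$ may be incomparable indices, the metrics $d_{\alpha(C)}$ and $d_{\alpha(C')}$ live on different second countable spaces, and no inequality relates them. (Also, ``minimal index'' has no meaning in a merely $\sigma$-complete directed set.) The paper sidesteps all of this by taking a different route: it uses condition~(2) of Theorem~1.1 rather than condition~(3), embeds $X$ $\pi$-regularly in $\mathbb R^\tau$, monotonizes the $\pi$-regular operator to an $\mathrm e_1$ that commutes with increasing unions, and sets $d(x,\overline U)=\rho(x,\overline{\mathrm e_1(U)})$ for Shchepin's $\kappa$-metric $\rho$ on $\mathbb R^\tau$. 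All four axioms are then inherited from the single ambient $\kappa$-metric, with $K1^*)$ coming directly from $\pi$-regularity of $\mathrm e_1$; no inverse system enters this direction at all.

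For $(\Leftarrow)$ your outline is closer in spirit to the paper's, but it omits the role of $K3)$. The paper embeds $X$ in $\mathbb I^A$ and calls a countable $B\subset A$ \emph{$d$-admissible} when the continuous functions $f_{p_B^{-1}(V)}(x)=d(x,\overline{p_B^{-1}(V)})$ factor through $p_B$ for every $V$ in a fixed countable base of $X_B$; it is precisely continuity of these functions (axiom $K3$) that lets one enlarge any countable $C\subset A$ to an admissible $B$ by a closing-off argument. Skeletality of $p_B$ for admissible $B$ then uses $K4)$ to pass from basic to arbitrary open sets via an increasing countable cover, after which $K1^*)$ supplies the contradiction. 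Your families $\mathcal F$ are not closed under anything that invokes $K3)$, and the ``$K1^*)$-closure'' you impose records one dense open witness per $C$ rather than the factorization statement needed to control the quotient map; the missing ingredient in the bookkeeping you anticipate is the joint use of $K3)$ and $K4)$, not $K1^*)$ alone.
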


\begin{cor}
Every $\mathrm I$-favorable space is quasi $\kappa$-metrizable.
\end{cor}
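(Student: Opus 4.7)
The plan is to reduce the statement to the compact case covered by Theorem 1.4, by passing from $X$ to its Stone--\v{C}ech compactification $\beta X$. Fix an $\mathrm I$-favorable space $X$. I will show first that $\beta X$ is itself $\mathrm I$-favorable, and second that any quasi $\kappa$-metric on $\beta X$ restricts to a quasi $\kappa$-metric on $X$.

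For the first step, let $\sigma$ be a winning strategy for Player I on $X$, and for nonempty open sets $V_0,\dots,V_n$ in $\beta X$ put
\[
\sigma'(V_0,\dots,V_n):=\beta X\setminus\overline{X\setminus\sigma(V_0\cap X,\dots,V_n\cap X)}^{\beta X},
\]
and analogously for $\sigma'(\varnothing)$. Since $X\setminus\sigma(\cdot)$ is closed in $X$, its $\beta X$-closure meets $X$ in itself, so $\sigma'(V_0,\dots,V_n)\cap X=\sigma(V_0\cap X,\dots,V_n\cap X)$. Density of $X$ in $\beta X$ guarantees that every nonempty open $V_k\subset\sigma'(V_0,\dots,V_{k-1})$ produces a nonempty $V_k\cap X\subset\sigma(V_0\cap X,\dots,V_{k-1}\cap X)$, so the traces on $X$ form a $\sigma$-legal play. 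Since $\sigma$ is winning, $\bigcup_k(V_k\cap X)$ is dense in $X$, whence $\bigcup_k V_k$ is dense in $\beta X$; hence $\sigma'$ wins and $\beta X$ is $\mathrm I$-favorable. Theorem 1.4 then supplies a quasi $\kappa$-metric $\rho$ on $\beta X$.

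For the second step, note that the canonically closed subsets of $X$ are exactly $C=\overline{U}^X$ with $U=\mathrm{Int}(C)$ regular open in $X$, so one may define $\rho_X(x,C):=\rho(x,\overline{U}^{\beta X})$ for $x\in X$. Axioms (K2) and (K3) are routine, and (K4) follows from the identity $\overline{\bigcup_\alpha U_\alpha}^{\beta X}=\overline{\bigcup_\alpha\overline{U_\alpha}^{\beta X}}^{\beta X}$ together with (K4) for $\rho$. The genuinely delicate axiom is (K$1^*$): given the dense open witness $\widetilde V\subset\beta X\setminus\overline{U}^{\beta X}$ provided by (K$1^*$) for $\rho$, one sets $V:=\widetilde V\cap X$, and density of $X$ in $\beta X$ promotes the $\beta X$-density of $\widetilde V$ in $\beta X\setminus\overline{U}^{\beta X}$ to $X$-density of $V$ in $X\setminus C$, while the zero-set condition transfers directly since for $x\in X$ one has $\rho_X(x,C)=0\Leftrightarrow x\notin\widetilde V\Leftrightarrow x\notin V$. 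This verification of (K$1^*$) is the main technical step I expect to take some care.
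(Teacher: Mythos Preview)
Your proof is correct and follows essentially the same route as the paper's: pass to $\beta X$, apply Theorem 1.4 there, and restrict the resulting quasi $\kappa$-metric to $X$, checking the axioms exactly as you outline. The only difference is cosmetic: where you build an explicit winning strategy $\sigma'$ on $\beta X$ from $\sigma$, the paper simply invokes the Daniels--Kunen--Zhou fact (already recalled in Section~2) that $\mathrm I$-favorability transfers between a space and any dense subspace, so $\beta X$ is $\mathrm I$-favorable immediately.
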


The paper is organized as follows:
Section 2 contains the proof of Theorem 1.1 and Corollaries 1.2-1.3. The proofs of Theorem 1.4 and Corollary 1.5 are contained in section 3. In section 4 we provide the proof of some results concerning almost continuous inverse systems with nearly open bounding maps, which were announced in \cite{v}.

\section{Proof of Theorem 1.1}

If follows from the definition of $\mathrm{I}$-favorability that a given space is $\mathrm{I}$-favorable if and only if there are
a $\pi$-base $\mathcal B$ and a function $\sigma:\bigcup_{n\geq 0}\mathcal B^n\to\mathcal B$ such that the
union $\bigcup_{n\geq 0}U_n$ is dense in $X$ for any sequence
$$\big(\sigma(\varnothing),U_0,\sigma(U_0),U_1,\sigma(U_0,U_1),U_2,...,U_n,\sigma(U_0,U_1,..,U_n),U_{n+1},,,\big),$$
where $U_k$ and $\sigma(\varnothing)$ belong to $\mathcal B$, $U_0\subset\sigma(\varnothing)$ and
$U_{k+1}\subset\sigma(U_0,U_1,..,U_k)$ for every $k\geq 0$. Such a function will be also called a winning strategy. Recall that $\mathcal B$ is a
$\pi$-base for $X$ if every open set in $X$ contains an element from $\mathcal B$.
\begin{pro}\cite{ku}
Let $\mathcal B$ and $\mathcal P$ be two $\pi$-bases for $X$. Then there is a winning strategy $\sigma:\bigcup_{n\geq 0}\mathcal B^n\to\mathcal B$
if and only if there is a winning strategy $\mu:\bigcup_{n\geq 0}\mathcal P^n\to\mathcal P$.
\end{pro}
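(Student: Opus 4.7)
By symmetry, it suffices to show that a winning strategy $\sigma:\bigcup_{n\geq 0}\mathcal B^n\to\mathcal B$ yields a winning strategy $\mu:\bigcup_{n\geq 0}\mathcal P^n\to\mathcal P$. The idea is to simulate a play in the $\mathcal B$-game inside the $\mathcal P$-game: translate each move of player II in the $\mathcal P$-game into a move in the $\mathcal B$-game using the fact that $\mathcal B$ is a $\pi$-base, answer via $\sigma$, and then translate back using the fact that $\mathcal P$ is a $\pi$-base.

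Concretely, using the axiom of choice, pick a function $\varphi:\mathcal P\to\mathcal B$ such that $\varphi(V)\subset V$ for every $V\in\mathcal P$; this is possible because $\mathcal B$ is a $\pi$-base for $X$. Dually, for every $B\in\mathcal B$ fix some $\psi(B)\in\mathcal P$ with $\psi(B)\subset B$. Now define $\mu$ recursively by $\mu(\varnothing)=\psi(\sigma(\varnothing))$ and
\[
\mu(V_0,V_1,\ldots,V_n)\;=\;\psi\bigl(\sigma(\varphi(V_0),\varphi(V_1),\ldots,\varphi(V_n))\bigr)
\]
for any legal sequence $V_0,\ldots,V_n\in\mathcal P$ in a $\mu$-play.

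To verify that $\mu$ is winning, consider any run $(\mu(\varnothing),V_0,\mu(V_0),V_1,\ldots)$ of the game guided by $\mu$, and set $B_k=\varphi(V_k)\in\mathcal B$. Since $V_0\subset\mu(\varnothing)\subset\sigma(\varnothing)$, we get $B_0\subset\sigma(\varnothing)$; and since $V_{k+1}\subset\mu(V_0,\ldots,V_k)\subset\sigma(B_0,\ldots,B_k)$, we get $B_{k+1}\subset\sigma(B_0,\ldots,B_k)$. Thus $(B_k)_{k\ge 0}$ is a legal run of the $\sigma$-game, so $\bigcup_{k\ge 0}B_k$ is dense in $X$ by the hypothesis on $\sigma$. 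Because $B_k\subset V_k$, the union $\bigcup_{k\ge 0}V_k$ is also dense in $X$, which shows that $\mu$ is a winning strategy in the $\mathcal P$-game.

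There is essentially no obstacle here; the only thing to be careful about is threading the two $\pi$-base refinements correctly so that the simulated $\sigma$-play satisfies the nesting conditions $B_0\subset\sigma(\varnothing)$ and $B_{k+1}\subset\sigma(B_0,\ldots,B_k)$. Placing the refinement $\varphi$ on player II's moves and $\psi$ on $\sigma$'s outputs achieves exactly that, and the reverse implication follows by interchanging the roles of $\mathcal B$ and $\mathcal P$.
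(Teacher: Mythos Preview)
Your proof is correct and is essentially the same argument as the paper's: both simulate a $\sigma$-play inside a $\mu$-play by refining player~II's $\mathcal P$-moves into $\mathcal B$-moves and refining $\sigma$'s $\mathcal B$-outputs into $\mathcal P$-outputs. The only cosmetic difference is that you fix the refinement functions $\varphi,\psi$ explicitly up front (giving a closed formula for $\mu$), whereas the paper makes the analogous choices step by step during the inductive construction.
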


\begin{proof}
Suppose $\sigma:\bigcup_{n\geq 0}\mathcal B^n\to\mathcal B$ is a winning strategy. We define a winning strategy $\mu:\bigcup_{n\geq 0}\mathcal P^n\to\mathcal P$ by induction. We choose any open non-empty set $\mu(\varnothing)\in\mathcal P$ such that $\mu(\varnothing)\subset\sigma(\varnothing)$. If
$V_0\in\mathcal P$ is the answer of player II in the game played on $\mathcal P$ (i.e., $V_0\subset\mu(\varnothing)$), then we choose $U_0\in\mathcal B$ with $U_0\subset V_0$ ($U_0$ can be considered as the answer of player II in the game played on $\mathcal B$). Assume we already defined $V_0,..,V_{n}\in\mathcal P$ and
$U_0,..,U_{n}\in\mathcal B$ such that $U_{k+1}\subset V_{k+1}\subset\mu(V_0,..,V_k)\subset\sigma(U_0,..,U_k)$ for all $k\leq n-1$. Then, we choose
$\mu(V_0,..,V_n)\in\mathcal P$ such that $\mu(V_0,..,V_n)\subset\sigma(U_0,..,U_n)$. If $V_{n+1}\in\mathcal P$ is the choice of player II in the game played
on $\mathcal P$ such that $V_{n+1}\subset\mu(V_0,..,V_n)$, we choose $U_{n+1}\in\mathcal B$ with $U_{n+1}\subset V_{n+1}$. This complete the induction.
Since $\sigma$ is a winning strategy and $U_{k}\subset V_{k}$ for each $k$, the union $\bigcup_{n\geq 0}V_n$ is dense in $X$. So, $\mu$ is also a winning strategy.
\end{proof}
In \cite{vv} we considered $\mathrm{I}$-favorable spaces $X$ with respect to the co-zero sets meaning that there is a winning strategy
$\sigma:\bigcup_{n\geq 0}\Sigma^n\to\Sigma$, where $\Sigma$ is the family of all co-zero subsets of $X$. Proposition 2.1 shows that this is equivalent to $X$ being $\mathrm{I}$-favorable. So, all results from \cite{vv} are valid for $\mathrm{I}$-favorable spaces.

According to \cite[Corollary 1.4]{dkz}, if $Y$ is a dense subset of $X$, then $X$ is $\mathrm{I}$-favorable if and only $Y$ is $\mathrm{I}$-favorable.
So, every compactification of a space $X$ is $\mathrm{I}$-favorable provided $X$ is $\mathrm{I}$-favorable. And conversely, if a compactification of $X$ is $\mathrm{I}$-favorable, then so is $X$.
Because of that, very often when dealing with $\mathrm{I}$-favorable spaces, we can suppose that they are compact.

Let us introduced few more notations. Suppose $X\subset\mathbb I^A$ is a
compact space and $B\subset A$, where $\mathbb I=[0,1]$. Let $\pi_B\colon\mathbb
I^A\to\mathbb I^B$ be the natural projection and $p_B$ be
restriction map $\pi_B|X$. Let also $X_B=p_B(X)$. If $U\subset X$ we
write $B\in k(U)$ to denote that $p_{B}^{-1}\big(p_{B}(U)\big)=U$.
A base $\mathcal A$ for the topology of $X\subset\mathbb I^A$
consisting of open sets is called {\em special} if for every
finite $B\subset A$ the family $\{p_B(U):U\in\mathcal A, B\in
k(U)\}$ is a base for $p_B(X)$ and for each $U\in\mathcal A$ there is a finite set $B\subset A$ with $B\in k(U)$.

\begin{pro}
Let $X$ be a compact $\mathrm{I}$-favorable space and $w(X)=\tau$ is uncountable. Then there exists a
continuous inverse system $S=\{X_\delta, p^{\delta}_\gamma, \gamma<\delta<\lambda\}$, where $\lambda=\mathrm{cf}(\tau)$,
of compact $\mathrm{I}$-favorable spaces $X_\delta$ and skeletal bonding maps $p^{\delta}_\gamma$ such that
$w(X_\delta)<\tau$ for each $\delta<\lambda$ and
$X=\displaystyle\underleftarrow{\lim}S$.
\end{pro}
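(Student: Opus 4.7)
The plan is to realize $X$ as an inverse limit of cube-projection sub-images. First I would embed $X$ in the Tychonoff cube $\I^A$ with $|A|=\tau$ and fix a special base $\mathcal A$ for $X$, together with, for each $U\in\mathcal A$, a choice of a finite set $B(U)\in k(U)$. By Proposition~2.1, the I-favorability of $X$ provides a winning strategy $\sigma\colon\bigcup_{n\ge 0}\mathcal A^n\to\mathcal A$ on this base.

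The core step is a L\"owenheim--Skolem closure on subsets of $A$. For $C\subset A$, I would define $\widehat C$ to be the smallest subset of $A$ containing $C\cup B(\sigma(\varnothing))$ and closed under the following two operations: (i) if $U_0,\dots,U_n\in\mathcal A$ satisfy $B(U_i)\subset\widehat C$ for all $i$, then $B(\sigma(U_0,\dots,U_n))\subset\widehat C$; and (ii) for every finite $F\subset\widehat C$ and every member $W$ of a fixed countable base for $p_F(X)$, some witness $U\in\mathcal A$ with $F\in k(U)$ and $p_F(U)\subset W$ has $B(U)\subset\widehat C$. A routine iteration shows $|\widehat C|\le\max(|C|,\aleph_0)$. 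Since $\lambda=\mathrm{cf}(\tau)$, I would fix a continuous increasing chain $\{A_\delta\}_{\delta<\lambda}$ of subsets of $A$ with $|A_\delta|<\tau$ and $\bigcup_\delta A_\delta=A$, and inductively set $B_\delta=\widehat{A_\delta\cup\bigcup_{\gamma<\delta}B_\gamma}$, obtaining a continuous increasing chain with $|B_\delta|<\tau$, $\bigcup_\delta B_\delta=A$, and every $B_\delta$ closed under (i) and (ii).

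Setting $X_\delta=p_{B_\delta}(X)$ and taking the $p^\delta_\gamma$ to be the natural coordinate projections, compactness of $X$ together with continuity of the chain $\{B_\delta\}$ at limit ordinals yields the continuity of the inverse system $S=\{X_\delta,p^\delta_\gamma\}$ and the equality $X=\underleftarrow{\lim}\,S$, while $w(X_\delta)\le|B_\delta|<\tau$. I-favorability of each $X_\delta$ then comes cheaply: the subfamily $\mathcal A_\delta=\{U\in\mathcal A:B(U)\subset B_\delta\}$ is closed under $\sigma$ by (i), and since each $U\in\mathcal A_\delta$ is saturated with respect to $p_\delta$, the images $\{p_\delta(U):U\in\mathcal A_\delta\}$ are open and, by (ii), form a $\pi$-base of $X_\delta$; the pushforward of $\sigma|_{\mathcal A_\delta}$ under $p_\delta$ is a winning strategy on this $\pi$-base, and Proposition~2.1 applies.

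The main obstacle is the skeletality of the bonding maps. Because $p_\gamma=p^\delta_\gamma\circ p_\delta$, this reduces to showing every $p_\delta\colon X\to X_\delta$ is skeletal. Given a nonempty open $U\subset X$, I would pick $U_0\in\mathcal A$ with $U_0\subset U$ and decompose $B(U_0)=F\cup G$ with $F=B(U_0)\cap B_\delta$ and $G=B(U_0)\setminus B_\delta$. Applying (ii) to $F$ and a suitable basic open set $W$ in $p_F(X)$ meeting $p_F(U_0)$, one extracts $V\in\mathcal A_\delta$ with $p_F(V)\subset W$; here $p_\delta(V)=(p^\delta_F)^{-1}(p_F(V))$ is a nonempty open subset of $X_\delta$. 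The hard part is to verify $p_\delta(V)\subset\overline{p_\delta(U_0)}$, i.e.\ to show that every basic neighbourhood in $X_\delta$ of a point $y\in p_\delta(V)$ can be lifted, via the compact $G$-fibres of $p_\delta$, to a point of $U_0$; this ``coordinate-forgetting'' step exploits the game-strategic closure of $B_\delta$ under (i)--(ii), which rules out the existence of nowhere-dense exceptional sets in $X_\delta$ on which the fibres would fail to carry witnesses. Completing this fiberwise bookkeeping is the technical heart of the argument.
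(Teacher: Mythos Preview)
Your construction of the chain $\{B_\delta\}$ via a L\"owenheim--Skolem closure is essentially the same as the paper's, and your argument for the I-favorability of $X_\delta$ (pushing the strategy forward along $p_\delta$) is a legitimate alternative to the paper's route, which instead first proves skeletality of $p_\delta$ and then quotes the preservation of I-favorability under skeletal images.

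The genuine gap is in the skeletality step. You correctly isolate the hard point---showing that some open $p_\delta$-saturated set $V$ satisfies $p_\delta(V)\subset\overline{p_\delta(U)}$---but the argument you sketch does not get there. Choosing $V$ so that $p_F(V)$ lies inside a basic set $W$ that merely \emph{meets} $p_F(U_0)$ gives no control over $p_\delta(V)$ relative to $\overline{p_\delta(U_0)}$; the coordinates in $G=B(U_0)\setminus B_\delta$ can force $p_\delta(U_0)$ to be nowhere dense in $(p^\delta_F)^{-1}(W)$, and nothing in your ``coordinate-forgetting'' step uses the $\sigma$-closure of $B_\delta$ in a concrete way. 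The vague appeal to the strategy ``ruling out nowhere-dense exceptional sets'' is exactly the content that needs to be supplied.

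The paper closes this gap differently and cleanly: since $\mathcal B_\delta$ is closed under $\sigma$, one invokes \cite[Lemma~9]{kp1}, which says that for every open $V\subset X$ there exists $W\in\mathcal B_\delta$ such that every $U\in\mathcal B_\delta$ with $U\subset W$ meets $V$. From this density property one reads off $p_\delta(W)\subset\overline{p_\delta(V)}$ directly (any basic open subset of $p_\delta(W)$ pulls back to some $U\in\mathcal B_\delta$ with $U\subset W$, hence meets $V$). This is the missing idea: the $\sigma$-closure is used not to analyze fibres over individual points, but to produce a \emph{single} $W\in\mathcal B_\delta$ all of whose $\mathcal B_\delta$-subsets hit $V$.
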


\begin{proof}
We embed $X$ in a Tychonoff cube $\mathbb
I^A$ with $|A|=\tau$ and fix a special open base $\mathcal A=\{U_\alpha:\alpha\in A\}$ for
$X$ of cardinality $\tau$ which consists of open sets such that
for each $\alpha$ there exists a finite set $H_\alpha\subset A$ with
$H_\alpha\in k(U_\alpha)$. Let $\sigma:\bigcup_{n\geq 0}\mathcal A^n\to\mathcal A$ be a winning
strategy. We represent $A$ as the union of an increasing transfinite family $\{A_\delta:\delta<\lambda\}$
with $|A_\delta|<\tau$, and
 let $\mathcal A_\delta=\{U_\alpha:\alpha\in A_\delta\}$ for each $\delta<\lambda$.

For any finite set $C\subset A$ let
$\gamma_C$ be a fixed countable base for $X_C$. Observe that for
every $U\in\mathcal A$ there exists a finite set $B(U)\subset A$
such that $B(U)\in k(U)$ and $p_{B(U)}(U)$ is open in
$X_{B(U)}$. We are going to construct
by transfinite induction increasing families $\{B_\delta:\delta<\lambda\}$ and $\{\mathcal B_\delta:\delta<\lambda\}\subset\mathcal A$
satisfying the following conditions for every $\delta<\lambda$:
\begin{itemize}
\item[(1)] $A_\delta\subset B_\delta\subset A$, $\mathcal A_\delta\subset\mathcal B_\delta$, $|B_\delta|=|\mathcal B_\delta|<\tau$;
\item[(2)]  $B_\delta\in k(U)$ for all $U\in\mathcal B_\delta$;
\item[(3)] $p_C^{-1}(\gamma_C)\subset\mathcal B_\delta$ for  each finite $C\subset B_\delta$;
\item[(4)]  $\sigma(U_1,..,U_n)\in\mathcal B_\delta$ for every finite family $\{U_1,..,U_n\}\subset\mathcal B_\delta$;
\item[(5)]  $B_\delta=\bigcup_{\gamma<\delta}B_\gamma$ and $\mathcal B_\delta=\bigcup_{\gamma<\delta}\mathcal B_\gamma$ for all limit
cardinals $\delta$.
\end{itemize}
Suppose all $B_\gamma$ and $\mathcal B_\gamma $, $\gamma<\delta$, have
already been constructed for some $\delta<\lambda$. If $\delta$ is a
limit cardinal, we put $B_\delta=\bigcup_{\gamma<\delta}B_\gamma$ and $\mathcal B_\delta=\bigcup_{\gamma<\delta}\mathcal B_\gamma$.
If $\delta=\gamma+1$, we construct by induction a sequence
$\{C(m)\}_{m\geq 0}$ of subsets of $A$, and a sequence $\{\mathcal
V_m\}_{m\geq 0}$ of subfamilies of $\mathcal A$ such that:
\begin{itemize}
\item $C_0=B_\gamma$ and $\mathcal V_0=\mathcal B_\gamma$;
\item $C(m+1)=C(m)\bigcup\{B(U):U\in\mathcal V_m\}$;
\item $\mathcal V_{2m+1}=\mathcal V_{2m}\bigcup\{\sigma(U_1,..,U_s): U_1,..,U_s\in\mathcal
V_{2m}, s\geq 1\}$;
\item $\mathcal V_{2m+2}=\mathcal
V_{2m+1}\bigcup\{p_C^{-1}(\gamma_C):C\subset C(2m+1){~}\mbox{is
finite}\}$.
\end{itemize}

Now, we define $B_\delta=\bigcup_{m\geq 0}C(m)$ and $\mathcal B_\delta=\bigcup_{m\geq 0}\mathcal V_m$. It is easily seen that
$B_\delta$ and $\mathcal B_\delta$ satisfy conditions (1)-(5).

For every $\delta<\lambda$ let $X_\delta=X_{B_\delta}$ and
$p_\delta=p_{B_\delta}$.  Moreover, if $\gamma<\delta$, we have
$B_\gamma\subset B_\delta$, and let
$p^\delta_\gamma=p^{B_\delta}_{B_\gamma}$. Since
$A=\bigcup_{\delta<\lambda}B_\delta$, we obtain a continuous inverse
system $S=\{X_\delta, p^{\delta}_\gamma, \gamma<\delta<\lambda\}$ whose limit is $X$.
Observe also that each $X_\delta$ is of weight $<\tau$ because
$p_\delta(\mathcal B_\delta)$ is a base for $X_\delta$ (see condition (3)).

\smallskip
{\em Claim $1.$ All bonding maps $p^\delta_\gamma$ are skeletal.}
\smallskip

It suffices to show that all $p_\delta$ are skeletal. And this is
really true because each family $\mathcal B_\delta$ is stable with
respect to $\sigma$, see (4). Hence, by \cite[Lemma 9]{kp1}, for
every open set $V\subset X$ there exists $W\in\mathcal B_\delta$
such that whenever $U\subset W$ and $U\in\mathcal B_\delta$ we have
$V\cap U\neq\varnothing$. The last statement yields that $p_\delta$
is skeletal. Indeed, let $V\subset X$ be open, and $W\in\mathcal
B_\delta$ be as above. Then $p_\delta(W)$ is open in
$X_\delta$ because of condition (2). We claim that
$p_\delta(W)\subset\overline{p_\delta(V)}$. Indeed, otherwise
$p_\delta(W)\backslash\overline{p_\delta(V)}$ would be a non-empty
open subset of $X_\delta$. So, $p_\delta(U)\subset
p_\delta(W)\backslash\overline{p_\delta(V)}$ for some $U\in\mathcal
B_\delta$ (recall that $p_\delta(\mathcal B_\delta)$ is a base for
$X_\delta$). Since, by (2), $p_\delta^{-1}(p_\delta(U))=U$ and
$p_\delta^{-1}(p_\delta(W))=W$, we obtain $U\subset W$ and $U\cap
V=\varnothing$ which is a contradiction.

Finally, since the class of $\mathrm{I}$-favorable spaces is closed with respect to skeletal images \cite[Lemma 1]{kp3},
all $X_\delta$ are $\mathrm{I}$-favorable.
\end{proof}

An inverse system
$S=\{X_\alpha, p^{\beta}_\alpha, \alpha<\beta<\tau\}$, where $\tau$ is a given cardinal, is said to be {\em almost continuous}
provided for every limit cardinal $\gamma$ the space $X_\gamma$ is the almost limit of the inverse system $S_\gamma=\{X_\alpha, p^{\beta}_\alpha, \alpha<\beta<\gamma\}$.
If $X=\displaystyle\mathrm{a}-\underleftarrow{\lim}S$ of an almost continuous inverse system $S$ and $H\subset X$, the set
$$q(H)=\{\alpha:\mathrm{Int}\big(\big((p^{\alpha+1}_{\alpha})^{-1}(\overline{p_{\alpha}(H)})\big)
\backslash\overline{p_{\alpha+1}(H)}\big)\neq\varnothing\}$$ is
called a {\em rank of $H$}.

\begin{lem}\cite[Lemma 3.1]{vv}
Let $X=\displaystyle\mathrm{a}-\underleftarrow{\lim}S$ and $U\subset
X$ be open, where $S=\{X_\alpha, p^{\beta}_\alpha,
\alpha<\beta<\tau\}$ is almost continuous inverse system with skeletal bonding
maps. Then we have:
\begin{itemize}
\item[(1)] $\alpha\not\in q(U)$ if and only if
$(p_\alpha^{\alpha+1})^{-1}\big(\mathrm{Int}\overline{p_\alpha(U)}\big)\subset\overline{p_{\alpha+1}(U)}$;
\item[(2)] $q(U)\cap[\alpha,\tau)=\varnothing$ provided
$U=p_\alpha^{-1}(V)$ for some open $V\subset X_\alpha$.
\end{itemize}
\end{lem}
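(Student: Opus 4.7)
The plan is to unpack the definition of the rank $q(U)$ and to use, in both parts, the standard fact that a continuous skeletal map $f\colon Y'\to Y$ pulls preimages of nowhere-dense sets back to nowhere-dense sets: if a non-empty open set $W$ were contained in $\overline{f^{-1}(N)}$, hence in $f^{-1}(\overline N)$, then by skeletality the non-empty open set $\mathrm{Int}\,\overline{f(W)}$ would lie in $\overline N$, contradicting $\mathrm{Int}\,\overline N=\varnothing$. I would record this observation once and apply it to the bonding maps $p^{\beta+1}_\beta$ of $S$.

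For part (1), I would abbreviate $f=p^{\alpha+1}_\alpha$, $C=\overline{p_\alpha(U)}$, $A=f^{-1}(C)$, and $B=\overline{p_{\alpha+1}(U)}$. The identity $p_\alpha=f\circ p_{\alpha+1}$ gives $B\subseteq A$, so $\alpha\not\in q(U)$ is precisely $\mathrm{Int}(A\setminus B)=\varnothing$. In the forward direction, any $x\in f^{-1}(\mathrm{Int}\,C)$ lies in an open subset of $A$; if it missed the closed set $B$ it would have a neighborhood inside $A\setminus B$, contradicting empty interior, so $f^{-1}(\mathrm{Int}\,C)\subseteq B$. For the converse, I would assume this inclusion and suppose, towards a contradiction, that some non-empty open $W\subseteq A\setminus B$ existed; then $f(W)\subseteq C$, skeletality of $f$ yields a non-empty open set $\mathrm{Int}\,\overline{f(W)}\subseteq\mathrm{Int}\,C$, and density of $f(W)$ in $\overline{f(W)}$ provides a point $w\in W$ with $f(w)\in\mathrm{Int}\,C$, forcing $w\in B$ and contradicting $w\in W\subseteq A\setminus B$.

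For part (2), fix $\beta\geq\alpha$. From $U=p_\alpha^{-1}(V)=p_\beta^{-1}\bigl((p^\beta_\alpha)^{-1}(V)\bigr)$ together with the surjectivity $p_\beta(X)=X_\beta$ built into $X=\mathrm{a}-\underleftarrow{\lim}S$, one deduces $p_\beta(U)=(p^\beta_\alpha)^{-1}(V)$, which is \emph{open} in $X_\beta$, and similarly $p_{\beta+1}(U)=(p^{\beta+1}_\beta)^{-1}(p_\beta(U))$. Hence
\[(p^{\beta+1}_\beta)^{-1}\bigl(\overline{p_\beta(U)}\bigr)\setminus\overline{p_{\beta+1}(U)}\subseteq (p^{\beta+1}_\beta)^{-1}\bigl(\overline{p_\beta(U)}\setminus p_\beta(U)\bigr),\]
and the right-hand side is the preimage under the skeletal map $p^{\beta+1}_\beta$ of the boundary of the open set $p_\beta(U)$, hence nowhere dense by the observation above; consequently its interior is empty and $\beta\not\in q(U)$.

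The main technical step I anticipate is the converse of (1): extracting from a putative open witness $W\subseteq A\setminus B$ in $X_{\alpha+1}$ a point whose image lies in $\mathrm{Int}\,C$. The skeletal hypothesis on the bonding maps enters exactly at this point, via $\mathrm{Int}\,\overline{f(W)}\neq\varnothing$ combined with the density of $f(W)$ in its closure; the rest of the argument is a bookkeeping exercise with preimages, closures, and boundaries.
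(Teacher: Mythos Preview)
The paper does not supply its own proof of this lemma; it is quoted verbatim from \cite[Lemma~3.1]{vv} and used as a black box in the arguments that follow. So there is nothing in the present paper to compare your argument against.

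That said, your proof is correct and self-contained. The forward direction of (1) is pure point-set topology and uses nothing about $S$; the converse is where skeletality is genuinely needed, and your route---produce a non-empty open $W\subseteq A\setminus B$, push it down by $f$, use skeletality to get $\mathrm{Int}\,\overline{f(W)}\neq\varnothing$ inside $C$, and then pull a single point back into $B$---is the natural one. For (2) you correctly invoke the surjectivity $p_\beta(X)=X_\beta$ built into the definition of the almost limit to identify $p_\beta(U)$ and $p_{\beta+1}(U)$ as open preimages of $V$, after which the rank-defining set is trapped inside the preimage of a boundary and your nowhere-dense observation finishes the job. One cosmetic remark: in (2) you never actually need the full strength of ``preimages of nowhere-dense sets are nowhere dense''; it suffices that the preimage of a closed set with empty interior has empty interior, which is exactly what your opening paragraph proves when $N$ is already closed.
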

\begin{lem}
Let $S=\{X_\alpha, p^{\beta}_\alpha, 1\leq\alpha<\beta<\tau\}$ be an almost continuous
inverse system with skeletal bonding maps and
$X=\displaystyle\mathrm{a}-\underleftarrow{\lim}S$. The the following hold for any open $U\subset X$:
\begin{itemize}
\item[(1)] If $(p^\alpha_1)^{-1}\big(\mathrm{Int}\overline{p_1(U)}\big)\subset\mathrm{Int}\overline{p_{\alpha}(U)}$
for all $\alpha<\tau$, then $p_1^{-1}\big(\mathrm{Int}\overline{p_1(U)}\big)\subset\overline{U}$;
\item[(2)] If $\lambda<\tau$ and $q(U)\cap [\lambda,\tau)=\varnothing$, then $p_\lambda^{-1}\big(\mathrm{Int}\overline{p_\lambda(U)}\big)\subset\mathrm{Int}\overline{U}$. 
    \end{itemize}
\end{lem}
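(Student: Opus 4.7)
The plan for both parts is to exploit the embedding of $X$ in $\displaystyle\underleftarrow{\lim}S$ (and, at limit stages of the induction in part~(2), the embedding of $X_\gamma$ in $\displaystyle\underleftarrow{\lim}S_\gamma$ coming from almost continuity) in order to reduce to a single recurring argument: if $V_\alpha\subset X_\alpha$ is open and $V_\alpha\cap\mathrm{Int}\overline{p_\alpha(U)}\neq\varnothing$, then $V_\alpha\cap p_\alpha(U)\neq\varnothing$, whence $p_\alpha^{-1}(V_\alpha)\cap U\neq\varnothing$. Since the index set is directed, neighborhoods of the form $p_\alpha^{-1}(V_\alpha)$ with $V_\alpha$ open in $X_\alpha$ form a base for $X$, and the analogous statement holds for $X_\gamma$ whenever $\gamma$ is a limit ordinal.

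For part~(1), I would pick $x\in p_1^{-1}\big(\mathrm{Int}\overline{p_1(U)}\big)$ and an arbitrary basic neighborhood $V=p_\alpha^{-1}(V_\alpha)$ of $x$ with $\alpha\in[1,\tau)$. The standing hypothesis immediately gives $p_\alpha(x)\in V_\alpha\cap\mathrm{Int}\overline{p_\alpha(U)}$, so $V\cap U\neq\varnothing$ by the recurring observation above. Consequently $x\in\overline{U}$, which is the claim.

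For part~(2), the plan is first to prove by transfinite induction on $\alpha\in[\lambda,\tau)$ the inclusion
\[
(p^\alpha_\lambda)^{-1}\big(\mathrm{Int}\overline{p_\lambda(U)}\big)\subset\mathrm{Int}\overline{p_\alpha(U)}.
\]
The case $\alpha=\lambda$ is trivial; the successor step follows from Lemma~2.3(1) applied to $\alpha\not\in q(U)$, combined with the fact that the preimage of an open set is open. At a limit $\gamma<\tau$, almost continuity supplies $X_\gamma\subset\displaystyle\underleftarrow{\lim}S_\gamma$ with $p^\gamma_\alpha(X_\gamma)=X_\alpha$, so the basic-neighborhood argument from~(1) applies verbatim inside $X_\gamma$, with the inductive hypothesis replacing the standing assumption, yielding first $(p^\gamma_\lambda)^{-1}\big(\mathrm{Int}\overline{p_\lambda(U)}\big)\subset\overline{p_\gamma(U)}$ and then, by openness of the left-hand side, the desired inclusion into $\mathrm{Int}\overline{p_\gamma(U)}$. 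Once the transfinite inclusion is in place, the argument of part~(1) applies with $\lambda$ in place of $1$ to give $p_\lambda^{-1}\big(\mathrm{Int}\overline{p_\lambda(U)}\big)\subset\overline{U}$; openness of the left-hand side then promotes this to $\mathrm{Int}\overline{U}$.

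The main obstacle is the limit step of the transfinite induction in~(2): this is the only place where almost continuity enters essentially, and it requires transplanting the neighborhood argument of part~(1) from $X$ to the intermediate space $X_\gamma$. Fortunately the definition of almost limit supplies precisely what is needed, namely the embedding $X_\gamma\subset\displaystyle\underleftarrow{\lim}S_\gamma$ together with surjectivity of the partial projections, so that neighborhoods of the form $(p^\gamma_\alpha)^{-1}(V_\alpha)$ form a base of $X_\gamma$ and the inductive hypothesis can be invoked at every index $\alpha\in[\lambda,\gamma)$.
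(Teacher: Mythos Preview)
Your proposal is correct and follows essentially the same route as the paper: part~(1) is the natural basic-neighborhood argument (the paper simply cites \cite{vv}), and part~(2) is a transfinite induction using Lemma~2.3(1) at successors, almost continuity plus the argument of~(1) inside $X_\gamma$ at limits, and a final application of~(1) with $\lambda$ in place of~$1$. The only cosmetic differences are that the paper phrases the induction as a minimal-counterexample argument and, at the limit step, nominally applies~(1) to the open set $U_\gamma=\mathrm{Int}\overline{p_\gamma(U)}\subset X_\gamma$ rather than rerunning the neighborhood argument directly with $p_\gamma(U)$; your formulation is if anything slightly cleaner, since it sidesteps the need to identify $\mathrm{Int}\overline{p^\gamma_\alpha(U_\gamma)}$ with $\mathrm{Int}\overline{p_\alpha(U)}$.
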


\begin{proof}
The first item was proved in \cite[Lemma 3.2]{vv} under the assumption that $X=\displaystyle\underleftarrow{\lim}S$, but the same arguments work in our situation. Item (2) is equivalent to the inclusion
$(p_\lambda)^{-1}\big(\mathrm{Int}\overline{p_\lambda(U)}\big)\subset\overline{U}$.
 Let $A$ be the set of all $\alpha\in (\lambda,\tau)$ with
$(p^\alpha_\lambda)^{-1}\big(\mathrm{Int}\overline{p_\lambda(U)}\big)\setminus\overline{p_{\alpha}(U)}\neq\varnothing$.
Suppose $A$ is non-empty and let $\gamma=\min A$. Observe that $\gamma$ is a limit cardinal. Indeed, otherwise  $\gamma=\beta+1$ with
$\beta\geq\lambda$, so
$(p^\beta_\lambda)^{-1}\big(\mathrm{Int}\overline{p_\lambda(U)}\big)\subset\mathrm{Int}\overline{p_{\beta}(U)}$. Since $\beta\not\in q(U)$, according to
Lemma 2.3(1), we have $(p^\gamma_\beta)^{-1}\big(\mathrm{Int}\overline{p_\beta(U)}\big)\subset\overline{p_{\gamma}(U)}$. Hence,
$(p^\gamma_\lambda)^{-1}\big(\mathrm{Int}\overline{p_\lambda(U)}\big)\subset\overline{p_{\gamma}(U)}$, a contradiction.

Since $S$ is almost continuous and $\gamma$ is a limit cardinal, we have  $X_\gamma=\displaystyle\mathrm{a}-\underleftarrow{\lim}S_\gamma$, where $S_\gamma$ is the inverse system $\{X_\alpha, p^{\beta}_\alpha, \lambda\leq\alpha<\beta<\gamma\}$. Because $p_\gamma$ is skeletal, $U_\gamma=\mathrm{Int}\overline{p_\gamma(U)}\neq\varnothing$. So, we can apply item (1) to $X_\gamma$, the inverse system $S_\gamma$ and the open set $U_\gamma\subset X_\gamma$, to conclude that $(p^\gamma_\lambda)^{-1}\big(\mathrm{Int}\overline{p_\lambda(U)}\big)\subset\overline{p_{\gamma}(U)}$.
So, we obtain again a contradiction, which shows that $(p^\alpha_\lambda)^{-1}\big(\mathrm{Int}\overline{p_\lambda(U)}\big)\subset\overline{p_{\alpha}(U)}$ for all $\alpha\in [\lambda,\tau)$. Finally, because
  the system $\widetilde S_\lambda=\{X_\alpha, p^{\beta}_\alpha, \lambda\leq\alpha<\beta<\tau\}$ is almost continuous and
  $X=\displaystyle\mathrm{a}-\underleftarrow{\lim}\widetilde S_\lambda$, by item (1) we have  $p_\lambda^{-1}\big(\mathrm{Int}\overline{p_\lambda(U)}\big)\subset\mathrm{Int}\overline{U}$.
\end{proof}

Next lemma was established in \cite{vv} for continuous inverse systems. We present here a simplified proof concerning almost continuous systems.
\begin{lem}\cite[Lemma 3.3]{vv}
Let $S=\{X_\alpha, p^{\beta}_\alpha, \alpha<\beta<\tau\}$ be an almost
continuous inverse system with skeletal bonding maps and
$X=\displaystyle\mathrm{a}-\underleftarrow{\lim}S$. Assume $U, V\subset X$ are
open with $q(U)$ and $q(V)$ finite and
$\overline{U}\cap\overline{V}=\varnothing$. If $q(U)\cap q(V)\cap
[\gamma,\tau)=\varnothing$ for some $\gamma<\tau$, then
$\mathrm{Int}\overline{p_\gamma (U)}$ and
$\mathrm{Int}\overline{p_\gamma (V)}$ are disjoint.
\end{lem}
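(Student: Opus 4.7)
The plan is to argue by contradiction: assume $W_\gamma\subset\mathrm{Int}\overline{p_\gamma(U)}\cap\mathrm{Int}\overline{p_\gamma(V)}$ is a nonempty open set and construct, by transfinite induction on $\alpha\in[\gamma,\tau)$, a nonempty open $W_\alpha\subset\mathrm{Int}\overline{p_\alpha(U)}\cap\mathrm{Int}\overline{p_\alpha(V)}$, then reach a contradiction at the largest index of $q(U)\cup q(V)\cap[\gamma,\tau)$ via Lemma~2.4(2). A preparatory observation is that, since each bonding map and limit projection is skeletal, $\overline{p_\alpha(H)}$ is regular closed in $X_\alpha$ for open $H\subset X$ (in particular for $H\in\{U,V\}$), i.e.\ $\overline{\mathrm{Int}\overline{p_\alpha(H)}}=\overline{p_\alpha(H)}$: given $y\in\overline{p_\alpha(H)}$ and open $O\ni y$, pick $O'\ni y$ with $\overline{O'}\subset O$ and $x\in H\cap p_\alpha^{-1}(O')$, so that the skeletalness of $p_\alpha$ places a nonempty open subset of $O\cap\mathrm{Int}\overline{p_\alpha(H)}$ inside $\mathrm{Int}\overline{p_\alpha(p_\alpha^{-1}(O')\cap H)}$.

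For the successor step at $\alpha\in[\gamma,\tau)$, the hypothesis forces $\alpha\notin q(U)\cap q(V)$. If $\alpha\notin q(U)\cup q(V)$, I set $W_{\alpha+1}=(p^{\alpha+1}_\alpha)^{-1}(W_\alpha)$; Lemma~2.3(1) places it in $\mathrm{Int}\overline{p_{\alpha+1}(U)}\cap\mathrm{Int}\overline{p_{\alpha+1}(V)}$. If $\alpha\in q(U)\setminus q(V)$ (the case $q(V)\setminus q(U)$ is symmetric), Lemma~2.3(1) only yields $(p^{\alpha+1}_\alpha)^{-1}(W_\alpha)\subset\mathrm{Int}\overline{p_{\alpha+1}(V)}$, and I take $W_{\alpha+1}=(p^{\alpha+1}_\alpha)^{-1}(W_\alpha)\cap\mathrm{Int}\overline{p_{\alpha+1}(U)}$. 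This is nonempty: were it empty, regular closedness would force the open set $(p^{\alpha+1}_\alpha)^{-1}(W_\alpha)$ into $X_{\alpha+1}\setminus\overline{p_{\alpha+1}(U)}$, yet the surjection $p^{\alpha+1}_\alpha\colon\overline{p_{\alpha+1}(U)}\to\overline{p_\alpha(U)}$ (from compactness) lifts any $y\in W_\alpha$ to a point of $(p^{\alpha+1}_\alpha)^{-1}(W_\alpha)\cap\overline{p_{\alpha+1}(U)}$, a contradiction.

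At a limit $\lambda\in[\gamma,\tau)$, the finiteness of $q(U)\cup q(V)$ lets me choose $\alpha_0\in[\gamma,\lambda)$ strictly above every element of $q(U)\cup q(V)\cap[\gamma,\lambda)$ (or $\alpha_0=\gamma$ if that set is empty), so that $q(U)\cap[\alpha_0,\lambda)=q(V)\cap[\alpha_0,\lambda)=\varnothing$. The restricted system $\{X_\alpha, p^\beta_\alpha, \alpha_0\leq\alpha<\beta<\lambda\}$ is almost continuous with almost limit $X_\lambda$. Regular closedness gives $\overline{p^\lambda_\alpha(\tilde U)}=\overline{p_\alpha(U)}$ for the auxiliary open set $\tilde U=\mathrm{Int}\overline{p_\lambda(U)}\subset X_\lambda$, so the rank of $\tilde U$ in the restricted system equals $q(U)\cap[\alpha_0,\lambda)=\varnothing$. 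Applying Lemma~2.4(2) yields $(p^\lambda_{\alpha_0})^{-1}(\mathrm{Int}\overline{p_{\alpha_0}(U)})\subset\mathrm{Int}\overline{\tilde U}=\tilde U=\mathrm{Int}\overline{p_\lambda(U)}$, and symmetrically for $V$; hence $W_\lambda=(p^\lambda_{\alpha_0})^{-1}(W_{\alpha_0})$ is a nonempty open subset of $\mathrm{Int}\overline{p_\lambda(U)}\cap\mathrm{Int}\overline{p_\lambda(V)}$.

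To conclude, let $\alpha^*=\max(q(U)\cup q(V)\cap[\gamma,\tau))$ when this set is nonempty, else $\alpha^*=\gamma$. Since $\alpha^*\notin q(U)\cap q(V)$, one of $q(U),q(V)$ is disjoint from $[\alpha^*,\tau)$; say $q(V)$. Lemma~2.4(2) then gives $p_{\alpha^*}^{-1}(W_{\alpha^*})\subset\mathrm{Int}\overline V$, while the surjection $p_{\alpha^*}(\overline U)=\overline{p_{\alpha^*}(U)}$ produces some $x\in\overline U$ with $p_{\alpha^*}(x)\in W_{\alpha^*}$, putting $x$ into $\overline U\cap\mathrm{Int}\overline V\subset\overline U\cap\overline V=\varnothing$, a contradiction. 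The delicate part is the limit step, whose viability rests on the identity $\overline{p^\lambda_\alpha(\mathrm{Int}\overline{p_\lambda(U)})}=\overline{p_\alpha(U)}$, which collapses the rank of $\tilde U$ in the restricted system onto $q(U)\cap[\alpha_0,\lambda)$ and so reduces limits to the already-established Lemma~2.4(2).
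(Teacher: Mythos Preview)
Your overall architecture matches the paper's: both argue by contradiction via a transfinite induction establishing $\mathrm{Int}\overline{p_\alpha(U)}\cap\mathrm{Int}\overline{p_\alpha(V)}\neq\varnothing$ for all $\alpha\geq\gamma$, handle the limit stage by restricting to a tail subsystem and invoking Lemma~2.4(2), and derive the final contradiction from Lemma~2.4(2) applied above $q(U)\cup q(V)$. However, there is a genuine gap: you appeal to compactness twice---once for the surjectivity of $p^{\alpha+1}_\alpha\colon\overline{p_{\alpha+1}(U)}\to\overline{p_\alpha(U)}$ in the successor step, and once for $p_{\alpha^*}(\overline{U})=\overline{p_{\alpha^*}(U)}$ in the conclusion---yet the lemma carries no compactness hypothesis. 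The spaces $X_\alpha$ are arbitrary Tychonoff spaces and the projections need not be closed, so neither surjectivity claim is available.

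The repair is precisely what the paper uses: density in place of surjectivity. Since $p^{\alpha+1}_\alpha(p_{\alpha+1}(U))=p_\alpha(U)$, the image $p^{\alpha+1}_\alpha(\overline{p_{\alpha+1}(U)})$ contains $p_\alpha(U)$ and is therefore dense in $\overline{p_\alpha(U)}$; as $W_\alpha$ is a nonempty open subset of $\overline{p_\alpha(U)}$, it meets this dense set, producing $z\in(p^{\alpha+1}_\alpha)^{-1}(W_\alpha)\cap\overline{p_{\alpha+1}(U)}$. The same observation handles the final step: $W_{\alpha^*}$ is open in $X_{\alpha^*}$ and lies in $\overline{p_{\alpha^*}(U)}$, hence meets $p_{\alpha^*}(U)$, giving $x\in U\subset\overline{U}$ with $p_{\alpha^*}(x)\in W_{\alpha^*}$. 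With these substitutions your argument is correct and essentially identical to the paper's.
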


\begin{proof}
Suppose $\mathrm{Int} \overline{p_\gamma (U)}\cap
 \mathrm{Int}\overline{p_\gamma (V)}\neq\varnothing$. We are going to show by transfinite induction that $\mathrm{Int}
\overline{p_\beta(U)}\cap\mathrm{Int} \overline{p_\beta
(V)}\neq\varnothing$ for all $\beta\geq\gamma$. Assume this is done
for all $\beta\in(\gamma, \alpha)$
 with
 $\alpha<\tau$. If $\alpha$ is not a limit cardinal, then $\alpha-1$ belongs to at most one of the
 sets $q(U)$ and $q(V)$. Suppose $\alpha-1\not\in q(V)$.
 Hence,
 $(p^\alpha_{\alpha-1})^{-1}\big(\mathrm{Int}\overline{p_{\alpha-1}(V)}\big)\subset\mathrm{Int}\overline{p_{\alpha}(V)}$
 (see Lemma 2.3(1)). Due to our assumption, $\mathrm{Int}
\overline{p_{\alpha-1}(U)}\cap\mathrm{Int} \overline{p_{\alpha-1}
(V)}\neq\varnothing$. Moreover,
$p^\alpha_{\alpha-1}\big(\overline{p_\alpha(U)}\big)$ is dense in
$\overline{p_{\alpha-1}(U)}$. Hence, $\mathrm{Int}
\overline{p_{\alpha-1}(V)}$ meets
$p^\alpha_{\alpha-1}\big(\overline{p_\alpha(U)}\big)$. This yields
$\mathrm{Int}\overline{p_{\alpha}(V)}\cap\overline{p_\alpha(U)}\neq\varnothing$.
Finally, since $\overline{p_\alpha(U)}$ is the
closure of its interior,
$\mathrm{Int}\overline{p_{\alpha}(V)}\cap\mathrm{Int}\overline{p_\alpha(U)}\neq\varnothing$.

Suppose $\alpha>\gamma$ is a limit cardinal. Since $q(U)\cup q(V)$
is a finite set, there exists $\lambda\in(\gamma,\alpha)$ such that
$\beta\not\in q(U)\cup q(V)$ for all $\beta\in[\lambda,\alpha)$.
Now, we consider the almost continuous inverse system $S_\alpha=\{X_\delta, p^{\beta}_\delta, \lambda\leq\delta<\beta<\alpha\}$
with $X_\alpha=\displaystyle\mathrm{a}-\underleftarrow{\lim}S_\alpha$. Let $U_\alpha=\mathrm{Int}\overline{p_{\alpha}(U)}$ and
$V_\alpha=\mathrm{Int}\overline{p_{\alpha}(V)}$ and denote by $q_\alpha(U_\alpha)$ and $q_\alpha(V_\alpha)$ the ranks of $U_\alpha$ and $V_\alpha$
with respect to the system $S_\alpha$. The, according to Lemma 2.3(1), $\beta\in[\lambda,\alpha)$ does not belong to $q_\alpha(U_\alpha)$ if and only if
$(p^{\beta+1}_\beta)^{-1}\big(\mathrm{Int}\overline{p^\alpha_{\beta}(U_\alpha)}\subset\overline{p^\alpha_{\beta+1}(U_\alpha)}$. Since
$\overline{p^\alpha_{\beta}(U_\alpha)}=\overline{p_{\beta}(U)}$ and $\overline{p^\alpha_{\beta+1}(U_\alpha)}=\overline{p_{\beta+1}(U)}$, we obtain that
$\beta\not\in q_\alpha(U_\alpha)$ is equivalent to $\beta\not\in q(U)$. Similarly, $\beta\not\in q_\alpha(V_\alpha)$ iff $\beta\not\in q(V)$.
Consequently, $\beta\not\in q_\alpha(U_\alpha)\cup q_\alpha(V_\alpha)$ for all $\beta\in[\lambda,\alpha)$. Then, according to Lemma 2.4(2),
$(p^{\alpha}_\lambda)^{-1}\big(\mathrm{Int}\overline{p{\lambda}(U)}\subset\mathrm{Int}\overline{p_\alpha(U)}$ and
$(p^{\alpha}_\lambda)^{-1}\big(\mathrm{Int}\overline{p{\lambda}(V)}\subset\mathrm{Int}\overline{p_\alpha(V)}$. Because
$\mathrm{Int}\overline{p_\lambda(U)}\cap\mathrm{Int} \overline{p_\lambda(V)}\neq\varnothing$, we finally have
$\mathrm{Int}\overline{p_\alpha(U)}\cap\mathrm{Int} \overline{p_\alpha(V)}\neq\varnothing$. This completes the transfinite induction.

Therefore,
$\mathrm{Int}\overline{p_\beta(U)}\cap\mathrm{Int}\overline{p_\beta(V)}\neq\varnothing$
for all $\beta\in [\gamma,\tau)$. To finish the proof of this lemma,
take $\lambda(0)\in (\gamma,\tau)$ such that $\big(q(U)\cup
q(V)\big)\cap [\lambda(0),\tau)=\varnothing$. Then, according to Lemma 2.4(2) we have the following inclusions:
\begin{itemize}
\item $p_{\lambda(0)}^{-1}\big(\mathrm{Int}\overline{p_{\lambda(0)}(U)}\big)\subset\mathrm{Int}\overline{U}$;
\item $p_{\lambda(0)}^{-1}\big(\mathrm{Int}\overline{p_{\lambda(0)}(V)}\big)\subset\mathrm{Int}\overline{V}$.
\end{itemize}
Since
$\mathrm{Int}\overline{p_{\lambda(0)}(U)}\cap\mathrm{Int}\overline{p_{\lambda(0)}(V)}\neq\varnothing$,
the above inclusions imply
$\overline{U}\cap\overline{V}\neq\varnothing$, a contradiction.
Hence, $\mathrm{Int} \overline{p_\gamma (U)}\cap
 \mathrm{Int}\overline{p_\gamma (V)}=\varnothing$.
\end{proof}

Next proposition was announced in \cite[Proposition 3.2]{v} and a proof was presented in \cite[Proposition 3.4]{vv} (see Proposition 3.2
below for a similar statement concerning inverse systems with nearly open projections).
\begin{pro}\cite{v} Let $S=\{X_\alpha,p^{\beta}_\alpha, \alpha<\beta<\tau\}$ be an almost continuous
inverse system with skeletal bonding maps such that $X=\displaystyle\mathrm{a}-\underleftarrow{\lim}S$. Then the family of all open subsets of $X$ having a finite rank is a $\pi$-base for $X$.
\end{pro}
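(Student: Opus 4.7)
The approach is transfinite induction on the length $\tau$ of the inverse system. The base case is $\tau$ finite: then $q(U)\subset[0,\tau-1)$ for every open $U\subset X$, so every open set already has finite rank. For the inductive step, assume the conclusion for every almost continuous inverse system with skeletal bonding maps whose length is $<\tau$, fix a non-empty open $W\subset X$, and aim to produce a non-empty open $U\subset W$ with $q(U)$ finite.

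The initial reduction uses that $X$ is dense in $\displaystyle\underleftarrow{\lim}S$, so the cylinders $p_\alpha^{-1}(V)$ (with $\alpha<\tau$ and $V\subset X_\alpha$ open) form a base of $X$. Pick such an $\alpha$ and $V$ with $p_\alpha^{-1}(V)\subset W$. When $\tau$ is a limit ordinal one automatically has $\alpha+1<\tau$, so the inductive hypothesis applies to the sub-system $S|_{\le\alpha}$, whose almost limit is $X_\alpha$: it yields a non-empty open $V'\subset V$ with finite rank $q_{S|_{\le\alpha}}(V')$. Setting $U=p_\alpha^{-1}(V')$, Lemma 2.3(2) gives $q(U)\cap[\alpha,\tau)=\varnothing$, while for $\beta<\alpha$ one has $p_\beta(U)=p^\alpha_\beta(V')$, so the rank condition for $U$ in $S$ at $\beta$ is literally the rank condition for $V'$ in $S|_{\le\alpha}$ at $\beta$; therefore $q(U)=q_{S|_{\le\alpha}}(V')$ is finite.

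The successor case $\tau=\mu+1$ splits in two. If $\mu$ is a limit ordinal, almost continuity of $S$ identifies $X=X_\mu$ with the almost limit of $S|_{<\mu}$, a system of length $\mu<\tau$; the inductive hypothesis applies directly, and the resulting rank in $S$ agrees with the rank in $S|_{<\mu}$ since both are forced into $[0,\mu)$. If instead $\mu=\nu+1$ is itself a successor, then $X=X_\mu$ is not the limit of any shorter system, and this is the main obstacle. I bypass it via the top skeletal map $p^\mu_\nu$: the set $\mathrm{Int}\overline{p^\mu_\nu(W)}$ is non-empty by skeletality, the inductive hypothesis applied to $S|_{\le\nu}$ (of length $\nu+1<\tau$) produces a non-empty open $V\subset\mathrm{Int}\overline{p^\mu_\nu(W)}$ with $q_{S|_{\le\nu}}(V)$ finite, and I set $U=W\cap(p^\mu_\nu)^{-1}(V)$. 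A routine check using density of $p^\mu_\nu(W)$ in $\overline{p^\mu_\nu(W)}$ yields $\overline{p^\mu_\beta(U)}=\overline{p^\nu_\beta(V)}$ for every $\beta\le\nu$, from which $q(U)\subset q_{S|_{\le\nu}}(V)\cup\{\nu\}$, still finite.
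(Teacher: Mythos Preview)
Your argument is correct. The paper does not give a self-contained proof of this proposition, referring instead to an earlier paper and pointing to Proposition~4.2 for the analogous result with nearly open bonding maps. That proof uses a different induction: one fixes the system $S$ once and for all and shows by transfinite induction on $\alpha<\tau$ that the open sets $U\subset X$ with $q(U)\cap[1,\alpha]$ finite already form a base (in the skeletal case, a $\pi$-base) for $X$; only at the very end does one intersect with a cylinder $p_\beta^{-1}(G_\beta)$ to kill the rank on $[\beta,\tau)$. You instead induct on the length $\tau$ of the system, passing to strictly shorter sub-systems $S|_{\le\alpha}$ (or $S|_{\le\nu}$) and invoking the full statement for $X_\alpha$ (or $X_\nu$). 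Your scheme is more modular --- each shorter system is treated as a black box via the inductive hypothesis --- and puts genuine work into the double-successor case $\tau=\nu+2$ via the top skeletal map, whereas the paper's scheme stays inside one ambient system, has a trivial successor step, and concentrates all the effort in the limit step (Claim~7). Both arguments rest on the same rank calculus (Lemmas~2.3 and~2.4), and the identity $\overline{p^\mu_\beta(U)}=\overline{p^\nu_\beta(V)}$ you verify in your last sub-case plays essentially the same role as Claim~7 does in the paper.
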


\begin{pro}
Let $X$ be a compact $\mathrm{I}$-favorable space. Then every embedding of $X$ in another space is $\pi$-regular.
\end{pro}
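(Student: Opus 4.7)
The plan is to proceed by transfinite induction on $\tau=w(X)$, using Propositions 2.2, 2.6 and Lemmas 2.4--2.5 as the main inductive tools.

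First I would replace $Y$ by $\beta Y$: if $\reg_0\colon\mathcal T_X\to\mathcal T_{\beta Y}$ is a $\pi$-regular extender (the compact $X$ sits as a closed subspace of $\beta Y$), then $\reg(U)=\reg_0(U)\cap Y$ works for $X\subset Y$. The base case $w(X)\leq\aleph_0$ (compact metric $X$) is handled by a direct construction using a countable base, $C^*$-embeddability of compact subspaces of Tychonoff spaces, and normality of $\beta Y$.

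For the inductive step, apply Proposition 2.2 to write $X=\displaystyle\underleftarrow{\lim}\{X_\delta,p^\delta_\gamma\}_{\delta<\lambda}$ with each $X_\delta=p_{B_\delta}(X)$ a compact $\mathrm I$-favorable space of weight $<\tau$ and all $p^\delta_\gamma$ skeletal. Put $Y_\delta=\pi_{B_\delta}(Y)$; then $X_\delta$ is closed in $Y_\delta$, and the induction hypothesis supplies $\pi$-regular extenders $\reg^{(\delta)}\colon\mathcal T_{X_\delta}\to\mathcal T_{Y_\delta}$. Using Proposition 2.6, fix a $\pi$-base $\mathcal F$ of $X$ consisting of finite-rank open sets and, for each $U\in\mathcal F$, choose $\gamma_U<\lambda$ with $q(U)\subset[0,\gamma_U)$ so that Lemma 2.4(2) gives $p_{\gamma_U}^{-1}\bigl(\mathrm{Int}\,\overline{p_{\gamma_U}(U)}\bigr)\subset\mathrm{Int}\,\overline U$. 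Define $\reg(U)=(p'_{\gamma_U})^{-1}\bigl(\reg^{(\gamma_U)}(\mathrm{Int}\,\overline{p_{\gamma_U}(U)})\bigr)$ for $U\in\mathcal F$ and extend by $\reg(V)=\bigcup\{\reg(U):U\in\mathcal F,\ \overline U\subset V\}$ for arbitrary open $V\subset X$.

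Condition (ii) is immediate from the $\pi$-regularity of $\reg^{(\gamma_U)}$, Lemma 2.4(2), and the fact that skeletal maps pull back nowhere-dense sets to nowhere-dense sets; applied to any non-empty open $W\subset V$, regularity of $X$ produces $U\in\mathcal F$ with $\overline U\subset W\cap V$, and then $\reg(U)\cap X\cap U$ is non-empty because $p_{\gamma_U}(U)$ is dense in $\overline{p_{\gamma_U}(U)}$ and $\reg^{(\gamma_U)}(\mathrm{Int}\,\overline{p_{\gamma_U}(U)})\cap X_{\gamma_U}$ is an open dense subset of $\mathrm{Int}\,\overline{p_{\gamma_U}(U)}$. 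Condition (i) reduces, via Lemma 2.5, to the following: if $V_1\cap V_2=\varnothing$ then every contributing pair $U_i\in\mathcal F$ satisfies $\overline{U_1}\cap\overline{U_2}\subset V_1\cap V_2=\varnothing$, so at any $\gamma\geq\max(\gamma_{U_1},\gamma_{U_2})$ the lemma yields $\mathrm{Int}\,\overline{p_\gamma(U_1)}\cap\mathrm{Int}\,\overline{p_\gamma(U_2)}=\varnothing$ in $X_\gamma$. The hard part will be transporting this disjointness back to $\reg(U_1)\cap\reg(U_2)=\varnothing$ in $Y$: this requires the extenders $\reg^{(\delta)}$ to satisfy a coherence condition $\reg^{(\gamma)}\circ(p^\gamma_\delta)^{-1}=((p^\gamma_\delta)')^{-1}\circ\reg^{(\delta)}$ on cylindrical open subsets of $X_\gamma$, which must be built into the inductive construction simultaneously and threaded through the limit stages of $S$ using its continuity.
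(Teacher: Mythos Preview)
Your overall architecture (transfinite induction on $w(X)$, Propositions 2.2 and 2.6, Lemmas 2.4--2.5) matches the paper's, but the disjointness step has a real gap, and the fix you propose is the wrong one. You assign to each $U\in\mathcal F$ a \emph{single} level $\gamma_U$ and define $\reg(U)$ as a pullback from that level. For $\overline{U_1}\cap\overline{U_2}=\varnothing$, Lemma 2.5 gives disjointness of the $\mathrm{Int}\,\overline{p_\gamma(U_i)}$ only at a \emph{common} $\gamma$; since $\reg(U_1),\reg(U_2)$ are defined over possibly different $\gamma_{U_1},\gamma_{U_2}$, this says nothing about $\reg(U_1)\cap\reg(U_2)$. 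Your cure---a tower of extenders with $\reg^{(\gamma)}\circ(p^\gamma_\delta)^{-1}=((p^\gamma_\delta)')^{-1}\circ\reg^{(\delta)}$---asks for a coherent system of $\pi$-regular operators through $S$; nothing in the inductive hypothesis provides this, and strengthening the statement to carry such coherence through limit stages is not something you have shown how to do (nor is it clear it can be done). A side issue: you write $Y_\delta=\pi_{B_\delta}(Y)$, but the sets $B_\delta$ come from an embedding of $X$, not of $Y$, in $\mathbb I^A$; you would first have to embed $\beta Y$ in $\mathbb I^A$ and run the construction of Proposition 2.2 relative to that.

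The paper sidesteps both problems. First, for compact $X$ it suffices to produce a $\pi$-regular embedding into \emph{some} cube (any map $\beta Y\to\mathbb I^A$ extending $X\hookrightarrow\mathbb I^A$ transports the operator back to $Y$), so one works in $\prod_{\alpha<\lambda}\mathbb I^{A(\alpha)}$ with independent extenders $\reg_\alpha\colon\mathcal T_{X_\alpha}\to\mathcal T_{\mathbb I^{A(\alpha)}}$ from the inductive hypothesis---no coherence required. Second, instead of one level per $U$, fix once and for all some $\alpha_0<\lambda$, set $\Omega(U)=\{\alpha_0\}\cup\{\alpha,\alpha+1:\alpha\in q(U)\}$, and put
\[
\theta_1(U)=\prod_{\alpha\in\Omega(U)}\reg_\alpha\bigl(\mathrm{Int}\,\overline{p_\alpha(U)}\bigr)\times\prod_{\alpha\notin\Omega(U)}\mathbb I^{A(\alpha)}.
\]
Now $\Omega(U_1)\cap\Omega(U_2)\ni\alpha_0$ is nonempty; taking $\beta=\max\bigl(\Omega(U_1)\cap\Omega(U_2)\bigr)$ forces $q(U_1)\cap q(U_2)\cap[\beta,\lambda)=\varnothing$, Lemma 2.5 gives $\mathrm{Int}\,\overline{p_\beta(U_1)}\cap\mathrm{Int}\,\overline{p_\beta(U_2)}=\varnothing$, and since $\beta$ is a coordinate restricted in \emph{both} $\theta_1(U_i)$ and $\reg_\beta$ preserves disjointness, $\theta_1(U_1)\cap\theta_1(U_2)=\varnothing$. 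The moral: restrict on a finite \emph{set} of levels including a fixed common one, rather than on a single level plus an unattainable compatibility of the $\reg^{(\delta)}$.
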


\begin{proof}
We are going to prove this proposition by transfinite induction with
respect to the weight $w(X)$. This is true if $X$ is metrizable, see
for example \cite[\S21, XI, Theorem 2]{k}. Assume the proposition is
true for any compact $\mathrm{I}$-favorable space $Y$ of weight $<\tau$, where $\tau$ is an uncountable cardinal. Suppose $X$ is compact
$\mathrm{I}$-favorable with $w(X)=\tau$.
Then, by Proposition 2.2, $X$ is the limit space of a continuous inverse system $\displaystyle S=\{X_\alpha, p^{\beta}_\alpha,
\alpha<\beta<\lambda\}$, where $\lambda=\mathrm{cf}(\tau)$, such that all $X_\alpha$ are compact $\mathrm{I}$-favorable spaces of weight
$<\tau$ and all bonding maps are surjective and skeletal. If suffices to show that there exists a $\pi$-regular embedding of $X$
in a Tychonoff cube $\I^A$ for some set $A$.

By Proposition 2.6, $X$ has a $\pi$-base $\mathcal B$ consisting of
open sets $U\subset X$  with finite rank. For every $U\in\mathcal B$
let $\Omega(U)=\{\alpha_0, \alpha, \alpha+1:\alpha\in q(U)\}$, where
$\alpha_0<\lambda$ is fixed. Obviously, $X$ is a subset of
$\prod\{X_\alpha:\alpha< \lambda\}$. For every $U\in\mathcal B$ we
consider the open set $\Gamma(U)\subset\prod\{X_\alpha:\alpha<\lambda\}$ defined by
$\Gamma(U)=\prod\{\mathrm{Int}\overline{p_\alpha(U)}:\alpha\in
\Omega(U)\}\times\prod\{X_\alpha:\alpha\not\in\Omega(U)\}.$

\smallskip
{\em Claim $2$. $\Gamma (U_1)\cap\Gamma (U_2)=\varnothing$ whenever
$\overline{U_1}\cap\overline{U_2}=\varnothing$. Moreover, there exists $\beta\in\Omega(U_1)\cap\Omega(U_2)$ with $\overline{p_\beta(U_1)}\cap
 \overline{p_\beta(U_2)}=\varnothing$.}

\smallskip
Let $\beta=\max\{\Omega(U_1)\cap \Omega(U_2)\}$. Then $\beta$ is either $\alpha_0$ or $\max\{q(U_1)\cap q(U_2)\}+1$. In both
cases $q(U_1)\cap q(U_2)\cap [\beta,\lambda)=\varnothing$. According to Lemma 2.5, $\mathrm{Int}\overline{p_\beta(U_1)}\cap\mathrm{Int}\overline{p_\beta(U_2)}=\varnothing$.
 Since $\beta\in \Omega(U_1)\cap \Omega(U_2)$, $\Gamma(U_1)\cap\Gamma(U_2)=\varnothing$.

For every $U\in\mathcal B$ and $\alpha$ let
$U_\alpha=\mathrm{Int}\overline{p_\alpha(U)}$.

\smallskip
{\em Claim $3$. $\bigcap_{\alpha\in\Delta}p_\alpha^{-1}(V_\alpha)\cap U\neq\varnothing$ for every finite set
$\Delta\subset\{\alpha:\alpha<\lambda\}$, where each $V_\alpha$ is an open and dense subset of $U_\alpha$.}

Obviously, this is true if $|\Delta|=1$. Suppose it is true for all $\Delta$ with $|\Delta|\leq n$ for some $n$, and let
$\{\alpha_1,..,\alpha_n, \alpha_{n+1}\}$ be a finite set of $n+1$ cardinals $<\tau$. Then $\displaystyle V=\bigcap_{i\leq n}p_{\alpha_i}^{-1}(V_{\alpha_i})\cap U\neq\varnothing$. Since $p_{\alpha_{n+1}}$ is a closed and skeletal map, $\displaystyle W=\mathrm{Int}\overline{p_{\alpha_{n+1}}(V)}$ is a non-empty subset of
$\displaystyle X_{\alpha_{n+1}}$ and $W\subset U_{\alpha_{n+1}}$.
Consequently $V_{\alpha_{n+1}}\cap W\neq\varnothing$. So, $V_{\alpha_{n+1}}\cap p_{\alpha_{n+1}}(V)\neq\varnothing$ and $\displaystyle \bigcap_{i\leq n+1}
p_{\alpha_i}^{-1}(V_{\alpha_i})\cap U\neq\varnothing$.

\smallskip
{\em Claim $4$. $\Gamma (U)\cap X$ is a non-empty subset of
$\overline{U}$ for all $U\in\mathcal B$.}

\smallskip
We are going to show first that $\Gamma (U)\cap X\neq\varnothing$ for all $U\in\mathcal B$. Indeed, we fix such $U$  and let
$\Omega(U)=\{\alpha_i:i\leq k\}$ with $\alpha_i\leq\alpha_j$ for $i\leq j$. By Claim 3, there exists $\displaystyle x\in
\bigcap_{i\leq k} p_{\alpha_i}^{-1}(U_{\alpha_i})\cap U$. So, $p_{\alpha_i}(x)\in U_{\alpha_i}$ for all $i\leq k$. This implies
$\Gamma (U)\cap X\neq\varnothing$.
To show that $\Gamma(U)\cap X\subset\overline{U}$, let
$y\in\Gamma(U)\cap X$ and $\beta(U)=\max q(U)+1$. Then
$p_{\beta(U)}(y)\in\mathrm{Int}\overline{p_{\beta(U)}(U)}$.
 Since $\alpha\not\in q(U)$ for all $\alpha\geq\beta(U)$, according to Lemma 2.4(2), we have
$y\in p_{\beta(U)}^{-1}\big(\mathrm{Int}\overline
{p_{\beta(U)}(U)}\big)\subset\overline{U}$. This completes the proof of Claim 4.

According to our assumption, each $X_\alpha$ is $\pi$-regularly
embedded in $\I^{A(\alpha)}$ for some $A(\alpha)$. So, there exists
a $\pi$-regular operator $\reg_\alpha:\mathcal
T_{X_\alpha}\to\mathcal T_{\I^{A(\alpha)}}$. For every $U\in\mathcal
B$ define the open set $\theta_1(U)\subset\prod_{\alpha<\lambda}\I^{A(\alpha)}$,
$$\theta_1(U)=\prod_{\alpha\in\Omega(U)}\reg_\alpha\big(\mathrm{Int}\overline{p_\alpha(U)}\big)\times\prod_{\alpha\not\in\Omega(U)}\I^{A(\alpha)}.$$
Now, we define a function $\theta$ from $\Tee_X$ to the topology of $\prod_{\alpha<\lambda}\I^{A(\alpha)}$ by
$$\theta(G)=\bigcup\{\theta_1(U):U\in\mathcal B\hbox{~}\mbox{and}\hbox{~}\overline{U}\subset G\}.$$
Let show
that $\theta$ is $\pi$-regular. It follows from Claim 2 that
$\theta(G_1)\cap\theta(G_2)=\varnothing$ provided $G_1\cap
G_2=\varnothing$. On the other hand, for every open $G\subset X$ we have $\theta(G)\cap
X\subset\bigcup\{\Gamma(U)\cap X:U\in\mathcal
B\hbox{~}\mbox{and}\hbox{~}\overline{U}\subset G\}$. Hence, by Claim 4, $\theta(G)\cap
X\subset\bigcup\{\overline U:U\in\mathcal
B\hbox{~}\mbox{and}\hbox{~}\overline{U}\subset G\}\subset G$.
To prove that $\theta(G)\cap X$ a dense subset of $G$ it suffices to show that $\theta_1(U)\cap X\neq\varnothing$ for all $U\in\mathcal B$ with $\overline U\subset G$. To this end, we fix such $U$ and let
 $V_\alpha=\reg_\alpha(U_\alpha)\cap X_\alpha$ for every $\alpha\in\Omega(U)$. Then $V_\alpha$ is a dense open subset of $U_\alpha$,
and by Claim 3, $V=\bigcap_{\alpha\in\Omega(U)}p_\alpha^{-1}(V_\alpha)\cap U$ is a non-empty subset of $\theta_1(U)\cap X$.
Therefore, $X$ is $\pi$-regularly embedded in $\I^A=\prod_{\alpha<\lambda}\I^{A(\alpha)}$.
\end{proof}

Next proposition was established in \cite{vv} (Proposition 3.7) assuming that $X$ is a $\pi$-regularly $C^*$-embedded
subset of the limit space of a $\sigma$-complete inverse system with open bounding maps and second countable spaces. The arguments there
work if $X$ is just a $\pi$-regularly embedded subset of a product of second countable spaces.
\begin{pro}
Let $X$ be a  $\pi$-regularly embedded subspace of a product of second countable spaces. Then $X$ is
skeletally generated.
\end{pro}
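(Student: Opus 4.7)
The plan is to adapt the proof of \cite[Proposition 3.7]{vv}, which established this conclusion under the extra hypothesis that $X$ be $\pi$-regularly $C^*$-embedded in the limit of a $\sigma$-complete inverse system of second countable spaces with open bonding maps. Writing $Y = \prod_{t \in T} Y_t$ with each $Y_t$ second countable and $\reg\colon \Tee_X \to \Tee_Y$ the given $\pi$-regular operator, the product $Y$ is itself the limit of the $\sigma$-complete inverse system $\{Y_B, \pi^B_{B'}\}$ indexed by countable $B \subset T$, whose bonding projections are open. The structural framework of \cite{vv} therefore transfers, and only those steps invoking $C^*$-embeddability need reworking via the openness of the $\pi^B_{B'}$.

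For each finite $F \subset T$ I would fix a countable base $\beta_F$ of $Y_F$, and for each basic open set $U_{F,V} = \pi_F^{-1}(V) \cap X$ with $V \in \beta_F$, a countable set $K(F,V) \subset T$ supporting the open set $\reg(U_{F,V}) \subset Y$. Let $A$ be the family of countable $B \subset T$ such that $K(F,V) \subset B$ whenever $F \subset B$ is finite and $V \in \beta_F$. A routine closure argument, iterating the operation $B \mapsto B \cup \bigcup\{K(F,V) : F \subset B \text{ finite}, V \in \beta_F\}$, shows that $A$ is $\sigma$-complete and cofinal among countable subsets of $T$. Putting $X_B = p_B(X)$ and $p^B_{B'} = \pi^B_{B'}|X_B$ yields the candidate system $S = \{X_B, p^B_{B'}, A\}$ of second countable spaces with continuous surjective bonding maps; the natural embedding of $X$ into $\lim_\leftarrow S$ satisfies $p_B(X) = X_B$, and almost $\sigma$-continuity of $S$ follows from the surjectivity of the $p^B_{B'}$.

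The main step is to verify that each bonding map $p^B_{B'}$ is skeletal. For $B \in A$, the defining closure of $B$ yields $\reg(p_B^{-1}(U)) = \pi_B^{-1}(\reg_B(U))$ whenever $U = (\pi^B_F)^{-1}(V) \cap X_B$ is a basic open of $X_B$, where $\reg_B(U) := \pi_B(\reg(U_{F,V})) \subset Y_B$. Extending $\reg_B$ to $\Tee_{X_B}$ by $\reg_B(W) = \bigcup\{\reg_B(U) : U \in \gamma^B, U \subset W\}$, with $\gamma^B$ the base of basic opens of $X_B$, produces a $\pi$-regular operator realising $X_B$ as a $\pi$-regularly embedded subspace of the second countable space $Y_B$. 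For $B' \subset B$ both in $A$ and a basic open $U' \in \gamma^{B'}$, the compatibility relation $\reg_B((p^B_{B'})^{-1}(U')) = (\pi^B_{B'})^{-1}(\reg_{B'}(U'))$ follows because the closure property of $B'$ forces $\reg(U_{F,V})$ to have support in $B'$ whenever $F \subset B'$ is finite.

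Skeletality of $p^B_{B'}$ then follows by mimicking the final argument of \cite[Proposition 3.7]{vv}: for a nonempty basic open $U \subset X_B$, the open set $\pi^B_{B'}(\reg_B(U)) \cap X_{B'}$ furnishes a nonempty open subset of $X_{B'}$ contained in $\overline{p^B_{B'}(U)}$, the inclusion being enforced by the compatibility together with the disjointness preserved by the $\pi$-regular operators on $X_B$ and $X_{B'}$. I anticipate the main technical obstacle to lie precisely in this inclusion, which in \cite{vv} was handled using $C^*$-embeddability, but which in the present setting must go through via the openness of the product projection $\pi^B_{B'}\colon Y_B \to Y_{B'}$: the disjointness of a basic open $W \subset X_{B'}$ from $p^B_{B'}(U)$ transports through the compatibility to disjointness of $\reg_B(U)$ from $(\pi^B_{B'})^{-1}(\reg_{B'}(W))$ in $Y_B$, from which the contradiction is extracted by projecting to $Y_{B'}$ and combining with the density of $\reg_{B'}(W) \cap X_{B'}$ in $W$.
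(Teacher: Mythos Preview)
Your approach—adapting \cite[Proposition~3.7]{vv}—is exactly what the paper does; the paper gives no independent argument here and simply asserts that the proof in \cite{vv} goes through once $C^*$-embeddability is dropped, so at the level of strategy you are aligned with the paper.

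There is, however, a genuine gap in your execution. You assign to each basic open $U_{F,V}$ a \emph{countable} set $K(F,V)\subset T$ ``supporting'' $\reg(U_{F,V})\subset Y$, and this is what drives both the saturation identity $\reg(p_B^{-1}(U))=\pi_B^{-1}(\reg_B(U))$ and the compatibility relation $\reg_B((p^B_{B'})^{-1}(U'))=(\pi^B_{B'})^{-1}(\reg_{B'}(U'))$ on which your skeletality argument rests. But an arbitrary open subset of an uncountable product need not depend on countably many coordinates—take the complement of a single point in $[0,1]^{\omega_1}$—so such $K(F,V)$ may simply fail to exist. In the $C^*$-embedded setting of \cite{vv} one can extend a continuous function witnessing $\reg(U_{F,V})\cap X$ from $X$ to $Y$ and then invoke the fact that continuous real-valued functions on products factor through countable subproducts; dropping $C^*$-embeddability removes precisely this device, and your outline does not supply a replacement. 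A natural repair is to replace $\reg(U_{F,V})$ by a single finitely-supported basic open $W_{F,V}\subset\reg(U_{F,V})$ meeting $X$: this secures finite $K(F,V)$ and preserves both the containment $W_{F,V}\cap X\subset U_{F,V}$ and pairwise disjointness inherited from $\reg$, but it loses the density of $W_{F,V}\cap X$ in $U_{F,V}$, and the skeletality verification for the limit projections $p_B$ must then be reworked to handle basic opens $U_{F_0,V_0}\subset X$ whose finite index set $F_0$ need not lie in $B$. You should check carefully whether this weaker information suffices before the adaptation can be declared complete.
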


{\em Proof of Theorem $1.1$}. To prove implication $(1)\Rightarrow
(2)$, suppose $X$ is $\mathrm{I}$-favorable
subspace of a space $Y$. Then
$\widetilde X=\overline{X}^{\beta Y}$ is a compactification of $X$. Since $\widetilde X$
is also $\mathrm{I}$-favorable,
according to Proposition 2.7, $\widetilde X$ is
$\pi$-regularly embedded in $\beta Y$. This yields that $X$ is
$\pi$-regularly embedded in $Y$.

$(2)\Rightarrow (3)$ Let $X$ be a  subset of a Tychonoff cube
$\I^A$. Then $X$ is $\pi$-regularly embedded in $\I^A$, and by Proposition 2.8,
$X$ is skeletally generated.

The implication $(3)\Rightarrow (1)$ follows as follows. If $X$ is skeletally generated, then $X=\mathrm{a}-\displaystyle\lim_\leftarrow S$, where
$S$ is an almost $\sigma$-continuous inverse system of second countable spaces $X_\alpha$, $\alpha\in A$, and skeletal bounding maps $p^\alpha_\beta$.
Because each $X_\alpha$ is $\mathrm I$-favorable, it follows from \cite[Theorem 3.3]{ku1} (see also \cite[Theorem 13]{kp1}) that $X$ is  $\mathrm I$-favorable too. {$\Box$}

\smallskip
{\em Proof of Corollary $1.2$.} Suppose $X$ is an
$\mathrm I$-favorable subspace of an extremally disconnected space $Y$. Then there exists a $\pi$-regular operator
$\mathrm{e}\colon\mathcal T_X\to\mathcal T_Y$. We need to show that the closure (in $X$) of every open subset of $X$ is also open. Since
$Y$ is extremally disconnected, $\overline{\mathrm{e}(U)}^Y$ is open in $Y$. So, the proof will be done if we prove that
$\overline{\mathrm{e}(U)}^Y\cap X=\overline{U}^X$ for all $U\in\mathcal T_X$. Because $\mathrm{e}(U)\cap X$ is a dense subset of $U$, we have
$\overline{U}^X\subset\overline{\mathrm{e}(U)}^Y\cap X$. Assume $\overline{\mathrm{e}(U)}^Y\cap X\backslash\overline{U}^X\neq\varnothing$ and choose $V\in\mathcal T_X$ with $V\subset\overline{\mathrm{e}(U)}^Y\backslash\overline{U}^X$. Then
$\mathrm{e}(V)\cap\overline{\mathrm{e}(U)}^Y\neq\varnothing$, so
$\mathrm{e}(V)\cap\mathrm{e}(U)\neq\varnothing$. The last one
contradicts $U\cap V=\varnothing$. \hfill{$\Box$}

\smallskip
{\em Proof of Corollary $1.3$.} Suppose $X$ is $\mathrm{I}$-favorable and $W\subset X$ is open. Then there is a $\pi$-regular  embedding of $X$ into a product $\Pi$ of lines. Obviously, $W$ is also $\pi$-regularly embedded in $\Pi$, and by Proposition 2.8, $W$ is $\mathrm{I}$-favorable. \hfill{$\Box$}

\section{Quasi $\kappa$-metrizable  spaces}
{\em Proof of Theorem $1.4$.} Suppose $X$ is a compact $\mathrm I$-favorable.  We embed $X$ in $\mathbb R^\tau$ for some cardinal $\tau$, and let
$\rho(z, C)$ be a $\kappa$-metric on $\mathbb R^\tau$, see \cite{sc1}.
According to Theorem 1.1, there exists a $\pi$-regular function
$\mathrm e:\mathcal T_X\to\mathcal T_{\mathbb R^\tau}$. We define a new function $\mathrm e_1:\mathcal T_X\to\mathcal T_{\mathbb R^\tau}$,
$$\mathrm e_1(U)=\bigcup\{\mathrm e(V):V\in\mathcal T_X{~}\mbox{and}{~}\overline V\subset U\}.$$
Obviously $\mathrm e_1$ is $\pi$-regular and it is also monotone, i.e. $U\subset V$ implies
$\mathrm e_1(U)\subset\mathrm e_1(V)$. Moreover, for every increasing transfinite family $\gamma=\{U_\alpha\}$ of open sets in $Y$ we have
$\mathrm e_1(\bigcup_\alpha U_\alpha)=\bigcup_\alpha\mathrm e_1(U_\alpha)$. Indeed, if $z\in\mathrm e_1(\bigcup_\alpha U_\alpha)$, then there is an open set
$V\in\mathcal T_X$ with $\overline V\subset\bigcup_\alpha U_\alpha$ and $z\in\mathrm e(V)$. Since $\overline V$ is compact and the family is increasing,
$\overline V$ is contained in some $U_{\alpha_0}$. Hence, $z\in\mathrm e(V)\subset\mathrm e_1(U_{\alpha_0})$. Consequently,
$\mathrm e_1(\bigcup_\alpha U_\alpha)\subset\bigcup_\alpha\mathrm e_1(U_\alpha)$. The other inclusion follows from monotonicity of $\mathrm e_1$.

Now, for every open $U\subset X$ and $x\in X$ we can define the function $d(x,\overline U)=\rho(x,\overline{\mathrm e_1(U)})$, where $\overline{\mathrm e_1(U)}$ is the closure of $\mathrm e_1(U)$ in $\mathbb R^\tau$. It is easily seen that
$d(x,\overline U)$ satisfies axioms $K2) - K3)$. Let show that it also satisfies $K4)$ and $K1^*)$. Indeed, assume $\{C_\alpha\}$ is an increasing transfinite family of regularly closet sets in $X$. We put $U_\alpha=\mathrm{Int}C_\alpha$ for every $\alpha$ and $U=\bigcup_\alpha U_\alpha$.  Thus, $\mathrm e_1(U)=\bigcup_\alpha\mathrm e_1(U_\alpha)$. Since $\{\overline{\mathrm e_1(U_\alpha)}\}$ is an increasing transfinite family of regularly closed sets in $\mathbb R^\tau$, $$d(x,\overline{\bigcup_\alpha C_\alpha})=\rho(x,\overline{\bigcup_\alpha\mathrm e_1(U_\alpha)})=\inf_\alpha\rho(x,\overline{\mathrm e_1(U_\alpha)})=\inf_\alpha d(x,C_\alpha).$$
To show that $K1^*)$ also holds, observe that $d(x,\overline{U})=0$ if and only if
$x\in X\cap\overline{\mathrm e_1(U)}$. Thus, we need to show that there is an open dense subset $V$ of $X\setminus\overline U$ such that
$X\cap\overline{\mathrm e_1(U)}=X\setminus V$. Because $\mathrm e_1(U)\cap X$ is dense in $U$, $\overline U\subset\overline{\mathrm e_1(U)}$. Hence,
$V=X\setminus\overline{\mathrm e_1(U)}$ is contained in $X\setminus\overline U$. To prove $V$ is dense in $X\setminus\overline U$, let $x\in X\setminus\overline U$ and $W_x\subset X\setminus\overline U$ be an open neighborhood of $x$. Then $W\cap U$ is empty, so $\mathrm e_1(W)\cap\mathrm e_1(U)=\varnothing$. This yields
$\mathrm e_1(W)\cap X\subset V$. On the other hand, $\mathrm e_1(W)\cap X$ is a non-empty subset of $W$, hence $W\cap V\neq\varnothing$. Therefore, $d$ is an quasi $\kappa$-metric on $X$.

Suppose $X$ is a compact space and let $d(x,\overline U)$ be a quasi $\kappa$-metric on $X$. We are going to show that $X$ is skeletally generated. To this end we embed $X$ in $\mathbb I^A$ for some $A$. Following the notations from the proof of Proposition 2.2, for any countable set $B\subset A$ let $\mathcal A_B$ be the countable base for $X_B=p_B(X)$ consisting of all open sets in $X_B$ of the form $X_B\cap\prod_{\alpha\in B} V_\alpha$, where each $V\alpha$ is an open subinterval of $\mathbb I=[0,1]$ with rational end-points and $V_\alpha\neq\mathbb I$ for finitely many $\alpha$.
For any open $U\subset X$ denote by $f_U$ the function $d(x,\overline U)$. We also write $p_B\prec g$, where $g$ is a map defined on $X$, if there is a map $h:p_B(X)\to g(X)$ such that $g=h\circ p_B$. Since $X$ is compact this is equivalent to the following: if
$p_B(x_1)=p_B(x_2)$ for some $x_1,x_2\in X$, then $g(x_1)=g(x_2)$.  We say that a countable set $B\subset A$ is {\em $d$-admissible} if $p_B\prec f_{p_B^{-1}(V)}$ for every $V\in\mathcal A_B$. Denote by $\mathcal D$ the family of all $d$-admissible subsets of $A$. We are going to show that all maps $p_B:X\to X_B$, $B\in\mathcal D$, are skeletal and the inverse system $S=\{X_B: p^B_C:C\subset B, C,B\in\mathcal D\}$ is $\sigma$-continuous with $\displaystyle X=\lim_\leftarrow S$.

\smallskip
{\em Claim $5$. For every countable set $C\subset A$ there is $B\in\mathcal D$ with $C\subset B$.}

We are going to construct a sequence of countable sets $B_n\subset A$ such that for every $n\geq 1$ we have:
\begin{itemize}
\item $C\subset B_n\subset B_{n+1}$;
\item $p_{B_{n+1}}\prec f_{p_{B_n}^{-1}}(V)$ for all $V\in\mathcal A_{B_n}$.
\end{itemize}
We show the construction of $B_1$, the other sets $B_n$ can be obtained in a similar way. Every function $f_{{p_C}^{-1}}(V)$, $V\in\mathcal A_C$, has a continuous extension $\widetilde f_{{p_C}^{-1}}(V)$ on $\mathbb I^A$. Moreover, every continuous function $g$ on $\mathbb I^A$ depends on countably many coordinates (i.e., there exists a countable set $B_g\subset A$ with $\pi_{B_g}\prec g$). This fact allows us to find a countable set $B_1\subset A$ containing $C$ such that
$p_{B_1}\prec f_{{p_C}^{-1}}(V)$ for all $V\in\mathcal A_C$. Next, let $B=\bigcup_{n=1}B_n$. Since $\mathcal A_B$ is the union of all families
$\{(p^B_{B_n})^{-1}(V):V\in\mathcal A_{B_n}\}$, $n\geq 1$, for every $W\in\mathcal A_B$ there is $m$ and $V\in\mathcal A_{B_m}$ with
$p_B^{-1}(W)=p_{B_m}^{-1}(V)$. Then, according to the construction of the sets $B_n$, we have $p_{B_{m+1}}\prec f_{p_B^{-1}(W)}$. Hence $p_B\prec f_{p_B^{-1}(W)}$ for all $W\in\mathcal A_B$, which means
that $B$ is $d$-admissible.

\smallskip
{\em Claim $6$. For every $B\in\mathcal D$ the map $p_B$ is skeletal.}

Suppose there is an open set $U\subset X$ such that the interior in $X_B$ of the closure $\overline{p_B(U)}$ is empty. Then $W=X_B\setminus\overline{p_B(U)}$
is dense in $X_B$. Let $\{W_m\}_{m\geq 1}$ be a countable cover of $W$ with $W_m\in\mathcal A_B$ for all $m$. Since $\mathcal A_B$ is finitely additive, we may assume that $W_m\subset W_{m+1}$, $m\geq 1$. Because $B$ is $d$-admissible, $p_B\prec f_{p_B^{-1}(W_m)}$ for all $m$. Hence, there are continuous functions
$h_m:X_B\to\mathbb R$ with $f_{p_B^{-1}(W_m)}=h_m\circ p_B$, $m\geq 1$. Recall that $f_{p_B^{-1}(W_m)}(x)=d(x,\overline{p_B^{-1}(W_m)})$ and
$p_B^{-1}(W)=\bigcup_{m\geq 1}p_B^{-1}(W_m)$. Therefore, $f_{p_B^{-1}(W)}(x)=d(x,\overline{p_B^{-1}(W)})=\inf_m f_{p_B^{-1}(W_m)}(x)$ for all $x\in X$. Moreover,
$f_{p_B^{-1}(W_{m+1})}(x)\leq f_{p_B^{-1}(W_m)}(x)$ because $W_m\subset W_{m+1}$. The last inequalities together with $p_B\prec f_{p_B^{-1}(W_m)}$
yields that $p_B\prec f_{p_B^{-1}(W)}$. So, there exists a continuous function $h$ on $X_B$ with $d(x,\overline{p_B^{-1}(W)})=h(p_B(x))$ for all $x\in X$.
Since $p_B(\overline{p_B^{-1}(W)})=\overline W=X_B$, we have that $h$ is the constant function zero. Then $d(x,\overline{p_B^{-1}(W)})=0$  for all $x\in X$.
But $\overline{p_B^{-1}(W)}\cap U=\varnothing$. So, according to $K1^*)$, there is a dense open subset $U'$ of $U$ with $d(x,\overline{p_B^{-1}(W)})>0$ for each $x\in U'$, a contradiction.

It is easily seen that the union of any increasing sequence of $d$-admissible sets is also $d$-admissible. This fact and Claims 5 yield that the
inverse system $S=\{X_B: p^B_C:C\subset B, C,B\in\mathcal D\}$ is $\sigma$-continuous and $\displaystyle X=\lim_\leftarrow S$. Finally, by Claim 6,
all maps $p_B$, $B\in\mathcal D$, are skeletal. So are the bounding maps $p^B_C$ in $S$. Therefore, $X$ is skeletally generated, and hence $\mathrm I$-favorable by Theorem 1.1.

{\em Proof of Corollary $1.5$.}
Since $Y=\beta X$ is $\mathrm I$-favorable, by Theorem 1.4 there is a quasi $\kappa$-metric $d$ on $Y$.
We are going to show that $d_X(x,\overline U^X)=d(x,\overline U)$, $U\in\mathcal T_X$, defines a quasi $\kappa$-metric on $X$, where $\overline U^X$  and $\overline U$ is the closure of $U$ in $X$ and $Y$ respectively. Since $\overline U$ is regularly closed in $Y$, this definition is correct. It follows directly from the definition that $d_X$ satisfies axioms $K2)$ and $K3)$. Because for any increasing transfinite family $\{C_\alpha\}$ of regularly closed sets in $X$ the family $\{\overline{C_\alpha}\}$ is also increasing and consists of regularly closed sets in $Y$,
$$d_X(x,\overline{\bigcup_\alpha C_\alpha}^X)=d(x,\overline{\bigcup_\alpha C_\alpha})=\inf_\alpha d(x,\overline{C_\alpha})=\inf_\alpha d_X(x,C_\alpha),$$ $d_X$ satisfies $K4)$. Finally, $d_X$ satisfies also $K1^*)$. Indeed, for any $U\in\mathcal T_X$ there exists $V\in\mathcal T_Y$ such that $V$ is dense in $Y\setminus\overline U$ and
$d(x,\overline U)>0$ if and only if $x\in V$. This implies that the set $V\cap X$ is dense in $X\setminus\overline U^X$ and $d_X(x,\overline U^X)>0$ iff
$x\in V\cap X$. So, $d_X$ is a quasi $\kappa$-metric on $X$.

\section{Inverse systems with nearly open bounding maps}
In this section we consider almost continuous inverse systems with nearly open bounding maps. Recall that a map $f:X\to Y$ is nearly open \cite{ArTk}
if $f(U)\subset\mathrm{Int}\overline{f(U)}$  for every open $U\subset X$. Nearly open maps were considered by Tkachenko \cite{tk} under the name $d$-open maps.
The following properties of
ranks were established in Lemmas 2.3-2.5 when consider almost continuous inverse systems with skeletal bounding maps. The same
proofs remain valid and for inverse systems with nearly open bounding maps.
\begin{lem}
Let $X=\displaystyle\mathrm{a}-\underleftarrow{\lim}S$,
where $S=\{X_\alpha, p^{\beta}_\alpha,
\alpha<\beta<\tau\}$ is almost continuous with nearly open bonding
maps. Then for every open sets $U,V\subset X$ we have:
\begin{itemize}
\item[(1)] $\alpha\not\in q(U)$ if and only if
$(p_\alpha^{\alpha+1})^{-1}\big(\mathrm{Int}\overline{p_\alpha(U)}\big)\subset\overline{p_{\alpha+1}(U)}$;
\item[(2)] $q(U)\cap[\alpha,\tau)=\varnothing$ provided
$U=p_\alpha^{-1}(W)$ for some open $W\subset X_\alpha$;
\item[(3)] Suppose $q(U)$ and $q(V)$ are finite and
$\overline{U}\cap\overline{V}=\varnothing$. If $q(U)\cap q(V)\cap
[\gamma,\tau)=\varnothing$ for some $\gamma<\tau$, then
$\mathrm{Int}\overline{p_\gamma (U)}$ and
$\mathrm{Int}\overline{p_\gamma (V)}$ are disjoint.
\end{itemize}
\end{lem}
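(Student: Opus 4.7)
The plan is to reduce Lemma~4.1 to Lemmas~2.3 and~2.5 by first observing that the class of nearly open maps is contained in the class of skeletal maps. Indeed, if $f\colon X\to Y$ is nearly open and $U\subset X$ is non-empty and open, then $\varnothing\neq f(U)\subset\mathrm{Int}\,\overline{f(U)}$, so $\mathrm{Int}\,\overline{f(U)}\neq\varnothing$, which is exactly the skeletal condition. The definition of the rank $q(U)$ is identical in the two settings, so an almost continuous inverse system with nearly open bonding maps is, a fortiori, an almost continuous inverse system with skeletal bonding maps.

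With this inclusion in hand, item~(1) of Lemma~4.1 is the content of Lemma~2.3(1), item~(2) is Lemma~2.3(2), and item~(3) is Lemma~2.5. Consequently, the entire statement follows by citation, with no additional transfinite induction required. The author's remark that ``the same proofs remain valid'' is consistent with this reading: once one notes the containment of the two classes of maps, the earlier arguments transfer verbatim.

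If one prefers an argument that avoids the detour through the skeletal case, a line-by-line inspection of the proofs of Lemmas~2.3--2.5 shows that the only properties of the bonding maps actually invoked are (a) continuity together with surjectivity on the factor level, which yields the density of $p^\beta_\alpha(\overline{p_\beta(U)})$ in $\overline{p_\alpha(U)}$ used in the successor step of Lemma~2.5, and (b) non-emptiness of $\mathrm{Int}\,\overline{p_\gamma(U)}$ for non-empty open $U$, which in the limit step of Lemma~2.5 is precisely the hypothesis needed to apply Lemma~2.4(1) to the subsystem $S_\gamma$. Both ingredients persist under the nearly-open hypothesis, so nothing in the proofs breaks. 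The only point requiring care is to confirm, by rereading the earlier proofs, that no hidden step appeals to a property strictly stronger than skeletality; I do not expect this to be a genuine obstacle, since the arguments of Section~2 were written in terms of the skeletal condition alone.
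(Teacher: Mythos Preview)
Your proposal is correct. The paper's own treatment of Lemma~4.1 is simply the sentence preceding it: ``The same proofs remain valid [as] for inverse systems with nearly open bounding maps,'' i.e.\ the reader is invited to re-inspect the arguments of Lemmas~2.3--2.5 and check that nothing breaks.

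Your route is slightly different and in fact a bit cleaner. Rather than re-running the proofs, you observe that nearly open maps are skeletal (if $\varnothing\neq U$ is open then $\varnothing\neq f(U)\subset\mathrm{Int}\,\overline{f(U)}$), so the \emph{hypotheses} of Lemmas~2.3 and~2.5 are already met and the conclusions can be quoted outright. This avoids any need to revisit the transfinite inductions. The paper does not make this containment explicit, which is why it falls back on ``the same proofs remain valid.'' Your second paragraph, sketching why the earlier proofs would go through line by line, is then redundant once the containment is noted, though it does match the paper's stated rationale. One small point: the hedge ``I do not expect this to be a genuine obstacle'' is unnecessary --- after your first paragraph the result is already proved, so there is no residual obstacle to worry about.
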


Next proposition was announced in \cite[Proposition 2.2]{v} without a proof.
Note that a similar statement was established in \cite{sc1} for inverse systems with open bounding maps.

\begin{pro}\cite{v} Let $S=\{X_\alpha,
p^{\beta}_\alpha, \alpha<\beta<\tau\}$ be an almost continuous
inverse system with nearly open bonding maps such that
$X=\displaystyle\mathrm{a}-\underleftarrow{\lim}S$. Then the family
of all open subsets of $X$ having a finite rank is a base for
$X$.
\end{pro}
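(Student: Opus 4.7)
The plan is to follow the same template as the proof of Proposition 2.6 in the skeletal setting, but to exploit the stronger nearly open hypothesis to upgrade the conclusion from a $\pi$-base to a genuine base. This mirrors Shchepin's argument for open bonding maps noted just above.

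First I would reduce to cylinder sets. Given $x\in X$ and an open neighbourhood $U$ of $x$, since $X$ sits in $\prod_{\alpha<\tau}X_\alpha$ with the subspace topology and all limit projections $p_\alpha$ are surjective, there exist $\gamma<\tau$ and an open $V\subset X_\gamma$ with $x\in W\subset U$, where $W=p_\gamma^{-1}(V)$. Thus it suffices to show that $q(W)$ is finite. By Lemma 4.1(2) we immediately get $q(W)\cap[\gamma,\tau)=\varnothing$, so the remaining task is to bound $q(W)\cap[0,\gamma)$.

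For this I would proceed by transfinite induction on $\beta\in[0,\gamma]$, showing that $q(W)\cap[0,\beta)$ is finite. Surjectivity of the bonding maps yields $p_\beta(W)=p^\gamma_\beta(V)$ and $p_{\beta+1}(W)=p^\gamma_{\beta+1}(V)$; combined with the factorisation $p^\gamma_\beta=p^{\beta+1}_\beta\circ p^\gamma_{\beta+1}$ and nearly openness, the canonical open sets $\mathrm{Int}\overline{p^\gamma_\alpha(V)}$ can be compared across successor steps. The criterion of Lemma 4.1(1), together with the defining inclusion $p^\gamma_\alpha(V)\subset\mathrm{Int}\overline{p^\gamma_\alpha(V)}$ coming from the nearly open condition, is then used to show that $\beta\in q(W)$ can occur only finitely often. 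Limit stages of the induction rely on almost continuity of the truncated subsystem, following the pattern of the proof of Lemma 2.4(2) applied inside Lemma 4.1.

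The main obstacle is controlling the successor contributions to $q(W)$. Unlike open maps, nearly open maps do not satisfy $\overline{f^{-1}(A)}=f^{-1}(\overline A)$, so one cannot manipulate preimages and closures freely; the remedy is to work throughout with the canonical regularly closed sets $\overline{p^\gamma_\alpha(V)}$ and their interiors, exploiting nearly openness only through the inclusion $p^\gamma_\alpha(V)\subset\mathrm{Int}\overline{p^\gamma_\alpha(V)}$ and translating back to rank statements via Lemma 4.1(1). The finiteness of $q(W)$ is ultimately forced by the fact that $V$ originates at a single coordinate $\gamma$, so only finitely many bonding steps below $\gamma$ can produce essential fibre splittings outside $\overline{p_{\beta+1}(W)}$; isolating and counting these steps is the technical heart of the argument.
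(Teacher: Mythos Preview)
Your reduction to cylinder sets is the wrong move: it is not true in general that $W=p_\gamma^{-1}(V)$ has finite rank, even when all bonding maps are open. Take $X_\alpha=\{0,1,2\}^\alpha$ for $\alpha<\omega_1$ with the coordinate projections, set $\gamma=\omega$, and let $V=\bigcup_{n\geq 0}[0^n1]\subset\{0,1,2\}^\omega$, where $[s]$ is the basic clopen set determined by the finite string $s$. One computes $p_m(W)=\{0^m\}\cup\bigcup_{n<m}[0^n1]$ for every $m<\omega$, and the string $0^m2$ lies in $(p^{m+1}_m)^{-1}\big(\overline{p_m(W)}\big)\setminus\overline{p_{m+1}(W)}$; since $\{0,1,2\}^{m+1}$ is discrete this set has nonempty interior, so $m\in q(W)$ for every $m$. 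Thus $q(W)\cap[0,\gamma)$ is infinite, and your sentence ``it suffices to show that $q(W)$ is finite'' already commits you to a false statement. Independently, the transfinite induction you sketch cannot pass the first limit stage: knowing that $q(W)\cap[0,\beta')$ is finite for each $\beta'<\beta$ gives no bound whatsoever on $q(W)\cap[0,\beta)$.

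The paper's argument is structurally different. It runs a transfinite induction on $\alpha<\tau$ proving that the family of open $U\subset X$ with $q(U)\cap[1,\alpha]$ finite is a \emph{base} for $X$; the inductive hypothesis concerns the whole family, not a single fixed cylinder. At a limit $\alpha$, given $x\in G$, nearly openness yields $p_\alpha(G)\subset G_\alpha:=\mathrm{Int}\overline{p_\alpha(G)}$, and almost continuity of the truncated system produces $\gamma<\alpha$ and an open $U_\gamma\ni p_\gamma(x)$ with $(p^\alpha_\gamma)^{-1}(U_\gamma)\subset G_\alpha$. The inductive hypothesis then supplies an open $W$ with $x\in W\subset p_\gamma^{-1}(U_\gamma)\cap G$ and $q(W)\cap[1,\gamma]$ finite, and the key computation (Claim~7) shows $q\big(p_\gamma^{-1}(W_\gamma)\cap G\big)\cap[1,\alpha]=q(W)\cap[1,\gamma]$, where $W_\gamma=\mathrm{Int}\overline{p_\gamma(W)}$. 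The desired finite-rank neighbourhood $p_\gamma^{-1}(W_\gamma)\cap G$ is thus \emph{constructed} from $W$ and $G$; it is not a cylinder, and this construction step is exactly the idea your outline is missing.
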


\begin{proof}
We are
going to show by transfinite induction that for every $\alpha<\tau$
the open subsets $U\subset X$ with $q(U)\cap[1,\alpha]$ being finite
form a base for $X$. Obviously, this is true for finite
$\alpha$, and it holds for $\alpha+1$ provided it is true for
$\alpha$. So, it remains to prove this statement for a limit
cardinal $\alpha$ if it is true for any $\beta<\alpha$. Suppose
$G\subset X$ is open and $x\in G$. Since $p_\alpha$ is nearly open, $G_\alpha=\mathrm{Int}\overline{p_\alpha(G)}$
contains $p_\alpha(G)$ (here both interior and closure are taken in $X_\alpha$).
Let $S_\alpha=\{X_\gamma, p^{\beta}_\gamma,
\gamma<\beta<\alpha\}$, $Y_\alpha=\displaystyle\underleftarrow{\lim}
S_\alpha$ and $\widetilde{p}^{\alpha}_\gamma\colon Y_\alpha\to X_\gamma$
are the limit projections of $S_\alpha$. Obviously, $X_\alpha$ is
naturally embedded as a dense subset of $Y_\alpha$ and each
$\widetilde{p}^{\alpha}_\gamma$ restricted on $X_\alpha$ is
$p^{\alpha}_\gamma$.
So, there exists $\gamma<\alpha$ and an open set $U_\gamma\subset
X_\gamma$ containing $x_\gamma=p_\gamma(x)$ such that $\displaystyle
(\widetilde{p}^{\alpha}_\gamma)^{-1}(U_\gamma)\subset\mathrm{Int}_{Y_\alpha}\overline{G_\alpha}^{Y_\alpha}$.
Consequently, $\displaystyle
(p^{\alpha}_\gamma)^{-1}(U_\gamma)\subset G_\alpha$.
We can suppose that $U_\gamma=\mathrm{Int}\overline{U_\gamma}$.
Then, according to the inductive assumption, there is an open set $W\subset X$ such that $q(W)\cap [1,\gamma]$ is finite
and $x\in W\subset p_\gamma^{-1}(U_\gamma)\cap G$.
 So,
$x_\gamma\in p_\gamma(W)\subset W_\gamma=\mathrm{Int}\overline{p_\gamma(W)}$ and
 $W_\gamma\subset U_\gamma$. Hence, $x\in p_\gamma^{-1}(W_\gamma)\cap G\subset G$.
 Next claim completes the induction.

\smallskip
{\em Claim $7$. $q\big(p_\gamma^{-1}(W_\gamma)\cap G\big)\cap
[1,\alpha]=q(W)\cap [1,\gamma]$.}
\smallskip

Indeed, for every $\beta\leq\gamma$ we have
$\overline{p_\beta\big(p_\gamma^{-1}(W_\gamma)\cap
G\big)}=\overline{p_\beta(W)}$. This implies
$$q(W)\cap
[1,\gamma]=q\big(p_\gamma^{-1}(W_\gamma)\cap G\big)\cap [1,\gamma].\leqno{(6)}$$
Moreover, since $\displaystyle
(p^{\alpha}_\gamma)^{-1}(W_\gamma)\subset\displaystyle
(p^{\alpha}_\gamma)^{-1}(U_\gamma)\subset\overline{p_\alpha(G)}$, we have
$$\overline{p_\beta\big(p_\gamma^{-1}(W_\gamma)\cap
G\big)}=\overline{p_\beta\big(p_\gamma^{-1}(W_\gamma)\big)}$$ for each $\beta\in [\gamma,\alpha]$.
Hence, $$q\big(p_\gamma^{-1}(W_\gamma)\cap G\big)\cap
[\gamma,\alpha]=q\big(p_\gamma^{-1}(W_\gamma)\big)\cap
[\gamma,\alpha].\leqno{(7)}$$
Note that, by Lemma 4.1(2),
$q\big(p_\gamma^{-1}(W_\gamma)\big)\cap
[\gamma,\alpha]=\varnothing$. Then the combination of $(1)$ and
$(2)$ provides the proof of the claim.

Therefore, for every $\alpha<\tau$ the open sets $W\subset X$ with
$q(W)\cap [1,\alpha]$ being finite form a base for $X$. Now, we can
finish the proof of the proposition. If $V\subset X$ is open and $x\in V$ we find
a set $G\subset V$ with $x\in G=p_\beta^{-1}(G_\beta)$, where $G_\beta$
is open in $X_\beta$ for some $\beta<\tau$. Then there exists an open set $W\subset G$ containing $x$
such that $q(W)\cap [1,\beta]$ is finite. Let
$W_\beta=\mathrm{Int}\overline{p_\beta(W)}$ and
$U=p_\beta^{-1}(W_\beta\cap G_\beta)$. It is easily seen that $x\in U$ and
$\overline{p_\nu(U)}=\overline{p_\nu(W)}$ for all $\nu\leq\beta$.
This yields  $q(U)\cap [1,\beta]=q(W)\cap [1,\beta]$. On the
other hand, by Lemma 4.1(2), $q(U)\cap [\beta,\tau)=\varnothing$.
Hence $U$ is a neighborhood of $x$ which is contained in $V$ and $q(U)$ is finite.
\end{proof}

Similar to the previous proposition, next one was also announced in \cite[Proposition 2.3]{v} without a proof.
\begin{pro}\cite{v}
Let $S=\{X_\alpha,
p^{\beta}_\alpha, \alpha<\beta<\tau\}$ be an almost continuous
inverse system with nearly open bonding maps such that
$X=\displaystyle\mathrm{a}-\underleftarrow{\lim}S$. Then:
\begin{itemize}
\item[(1)] $X$ is regularly embedded in $\prod_{\alpha<\tau}X_\alpha$;
\item[(2)] If, additionally, each $X_\alpha$ is regularly embedded in a space $Y_\alpha$, then $X$ is regularly embedded in $\prod_{\alpha<\tau}Y_\alpha$.
\end{itemize}
\end{pro}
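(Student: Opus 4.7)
The plan is to mimic the proof of Proposition 2.7 almost verbatim, with one decisive upgrade: the nearly-open hypothesis on the bonding maps $p^{\beta}_{\alpha}$ forces the inclusion $p_\alpha(U)\subset\mathrm{Int}\overline{p_\alpha(U)}$ for every open $U\subset X$, and this is precisely what turns the $\pi$-regular operator of Proposition 2.7 into a regular one. Note that every nearly open map is skeletal (nonempty open sets map to sets with nonempty interior), so Lemmas 2.3--2.5 and Proposition 4.2 are all available in the present setting.

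For part (1), use Proposition 4.2 to fix a base $\mathcal B$ of open subsets of $X$ whose ranks $q(U)$ are finite. For each $U\in\mathcal B$ define, following the proof of Proposition 2.7, the finite set $\Omega(U)=\{\alpha_0\}\cup q(U)\cup\{\alpha+1:\alpha\in q(U)\}$ (with $\alpha_0<\tau$ fixed) and the open box
\[
\Gamma(U)=\prod_{\alpha\in\Omega(U)}\mathrm{Int}\overline{p_\alpha(U)}\times\prod_{\alpha\notin\Omega(U)}X_\alpha
\ \subset\ \prod_{\alpha<\tau}X_\alpha.
\]
Two properties of $\Gamma$ are needed. First, \emph{disjointness}: if $\overline{U_1}\cap\overline{U_2}=\varnothing$ then $\Gamma(U_1)\cap\Gamma(U_2)=\varnothing$, proved exactly as Claim 2 of Proposition 2.7 via Lemma 4.1(3). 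Second, \emph{localization}: $U\subset\Gamma(U)\cap X\subset\overline{U}$ for every $U\in\mathcal B$. The right-hand inclusion is Claim 4 of Proposition 2.7, whose argument via Lemma 2.4(2) still applies since nearly open maps are skeletal. The left-hand inclusion is new and purely formal from near-openness: for $x\in U$ and any $\alpha$, $p_\alpha(x)\in p_\alpha(U)\subset\mathrm{Int}\overline{p_\alpha(U)}$, whence $x\in\Gamma(U)$. Now define $\reg:\Tee_X\to\Tee_{\prod X_\alpha}$ by $\reg(G)=\bigcup\{\Gamma(U):U\in\mathcal B,\ \overline{U}\subset G\}$. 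Disjointness of $\reg(G_1)$ and $\reg(G_2)$ when $G_1\cap G_2=\varnothing$ follows from the first property, and the second yields $\reg(G)\cap X=G$: the inclusion $\subset$ is immediate from $\Gamma(U)\cap X\subset\overline{U}\subset G$, while for $\supset$ every $x\in G$ sits inside some $U\in\mathcal B$ with $\overline{U}\subset G$ and hence in $\Gamma(U)\subset\reg(G)$.

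For part (2), let $\reg_\alpha:\Tee_{X_\alpha}\to\Tee_{Y_\alpha}$ be regular embedding operators and replace each factor $\mathrm{Int}\overline{p_\alpha(U)}$ in $\Gamma(U)$ by $\reg_\alpha\bigl(\mathrm{Int}\overline{p_\alpha(U)}\bigr)$, producing a box $\widetilde\Gamma(U)\subset\prod_{\alpha<\tau}Y_\alpha$. Because $\reg_\alpha(W)\cap X_\alpha=W$ for every open $W\subset X_\alpha$, the trace $\widetilde\Gamma(U)\cap X$ coincides with $\Gamma(U)\cap X$, so the localization property carries over. Disjointness also carries over: if $\overline{U_1}\cap\overline{U_2}=\varnothing$, the analogue of Claim 2 produces $\beta\in\Omega(U_1)\cap\Omega(U_2)$ with $\mathrm{Int}\overline{p_\beta(U_1)}\cap\mathrm{Int}\overline{p_\beta(U_2)}=\varnothing$, and then $\reg_\beta$, being disjointness-preserving, makes the $\beta$-factors of $\widetilde\Gamma(U_1)$ and $\widetilde\Gamma(U_2)$ disjoint. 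The verification that $\widetilde{\reg}(G)=\bigcup\{\widetilde\Gamma(U):U\in\mathcal B,\ \overline{U}\subset G\}$ is a regular operator into $\Tee_{\prod Y_\alpha}$ is then identical to part (1).

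The one substantive novelty beyond Proposition 2.7 is the equality $\reg(G)\cap X=G$ in place of mere density. This is precisely what the near-openness of the bonding maps supplies, via the inclusion $U\subset\Gamma(U)$; I expect this to be the single essential — but technically easy — new step, with everything else being a direct transcription of the arguments already developed for the skeletal case.
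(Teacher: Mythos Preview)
Your proposal is correct and follows essentially the same route as the paper's own proof: the paper also invokes Proposition~4.2 to obtain the base $\mathcal B$ of finite-rank open sets, defines $\Omega(U)$ and $\Gamma(U)$ exactly as you do, uses the near-openness of the projections to obtain $U\subset\Gamma(U)$, cites the arguments of Proposition~2.7 for $\Gamma(U)\cap X\subset\overline U$ and for disjointness, and then sets $\reg(V)=\bigcup\{\Gamma(U):\overline U\subset V\}$; part~(2) is handled identically by composing with the $\reg_\alpha$'s in each coordinate. Your identification of the single new ingredient---that near-openness yields $p_\alpha(U)\subset\mathrm{Int}\overline{p_\alpha(U)}$ and hence $U\subset\Gamma(U)$, upgrading $\pi$-regularity to regularity---matches the paper's treatment precisely.
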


\begin{proof}
$(1)$ We consider the embedding of $X$ in $\widetilde{X}=\prod_{\alpha<\tau}X_\alpha$ generated by the maps $p_\alpha$. According to Proposition 4.2, $X$ has a base $\mathcal B$ consisting of
open sets $U\subset X$  with finite rank $q(U)$. As in Proposition 2.7, for every $U\in\mathcal B$
let $\Omega(U)=\{\alpha_0, \alpha, \alpha+1:\alpha\in q(U)\}$, where
$\alpha_0<\tau$ is fixed.  For all $U\in\mathcal B$ and $\alpha<\tau$ let $U_\alpha=\mathrm{Int}\overline{p_\alpha(U)}$
and  $\Gamma(U)\subset\prod\{X_\alpha:\alpha<
\tau\}$ be defined by
$$\Gamma(U)=\prod\{U_\alpha:\alpha\in
\Omega(U)\}\times\prod\{X_\alpha:\alpha\not\in\Omega(U)\}.$$ Since $p_\alpha(U)\subset U_\alpha$ for each $\alpha$,
$U$ is contained in $\Gamma(U)$.

Using the arguments from the proof of Proposition 2.7, one can show that $\Gamma(U)\cap X\subset\overline U$.
Finally, we define the required regular operator $\mathrm{e}:\Tee_X\to\Tee_{\widetilde{X}}$ by
$\mathrm{e}(V)=\bigcup\{\Gamma(U):U\in\mathcal B,\overline U\subset V\}$.

$(2)$ For each $\alpha<\tau$ let $\mathrm{e}_\alpha:\Tee_{X_\alpha}\to\Tee_{Y_\alpha}$ be a regular operator.
Define a function $\theta_1:\mathcal B\to\Tee_{\widetilde Y}$, where $\widetilde Y=\prod_{\alpha<\tau}Y_\alpha$, by
$$\theta_1(U)=\prod_{\alpha\not\in \Omega(U)}\mathrm{e}_\alpha(U_\alpha)\times\prod_{\alpha\not\in \Omega(U)}Y_\alpha.$$
Consider  $\theta:\Tee_X\to\Tee_{\widetilde Y}$,
$\theta(G)=\bigcup\{\theta_1(U):U\in\mathcal
B\hbox{~}\mbox{and}\hbox{~}\overline{U}\subset G\}.$ Since $\theta_1(U)\cap X=\Gamma (U)$ and $U\subset\Gamma(U)\subset\overline U$
for any $U\in\mathcal B$, $\theta(G)\cap X=G$. Moreover, Claim 4 implies that $\theta(G_1)\cap\theta(G_2)=\varnothing$ provided
$G_1\cap G_2=\varnothing$. Thus, $\theta$ is a regular operator.
\end{proof}

\smallskip
\textbf{Acknowledgments.} The author would like to express his
gratitude to A. Kucharski for several discussions.



\begin{thebibliography}{999}

\bibitem{ArTk}
A.~Arhangel'skii  and M.~Tkachenko, {\it Topological groups and related
  structures}, Atlantis Studies in Mathematics, vol.~1, Atlantis Press, Paris, World Scientific, 2008.
























\bibitem{dkz}
P.~Daniels,~K.~Kunen and H.~Zhou \textit{On the open-open game},
Fund. Math.{\textbf 145} (1994), no. 3, 205--220.

\bibitem{ku}
A.~Kucharski, A private communication, May 2017.

\bibitem{ku1}
A.~Kucharski, \textit{On open-open Games of Uncountable Lenght}, Int. J. Math. Math. Sci.
\textbf{2012} (2012), Art. ID 208693, 1-11.

\bibitem{kp3}
A.~Kucharski and S.~Plewik, \textit{Skeletal maps and $I$-favorable
spaces}, Acta Univ. Carolin. Math. Phys. \textbf{51} (2010), 67--72.

\bibitem{kp1}
A.~Kucharski and S.~Plewik, \textit{Inverse systems and
$I$-favorable spaces}, Topology Appl. \textbf{156} (2008), no. 1,
110--116.

\bibitem{kp2}
A.~Kucharski and S.~Plewik, \textit{Game approach to universally
Kuratowski-Ulam spaces}, Topology Appl. \textbf{154} (2007), no. 2,
421--427.


\bibitem{k}
K.~Kuratowski, \textit{Topology, vol. I}, Academic Press, New York;
PWN-Polish Scientific Publishers, Warsaw 1966.


\bibitem{sc1}
E.~Shchepin, \textit{Topology of limit spaces of uncountable inverse
spectra}, Russian Math. Surveys \textbf{315} (1976), 155--191.

\bibitem{sc2}
E.~Shchepin, \textit{Functors and uncountable degrees of compacta},
Uspekhi Mat. Nauk \textbf{36} (1981), no. 3, 3--62 (in Russian).



\bibitem{s}
L.~Shirokov, \textit{An external characterization of Dugundji spaces
and $k$-metrizable compacta}, Dokl. Akad. Nauk SSSR \textbf{263}
(1982), no. 5, 1073--1077 (in Russian).

\bibitem{tk}
M.~Tkachenko, \textit{Some results on inverse spetra II}, Comment.
Math. Univ. Carol. \textbf{22} (1981), no. 4, 819--841.



\bibitem{vv}
V.~Valov, \textit{External characterization of $\mathrm I$-favorable
spaces}, Mathematica Balkanica \textbf{25} (2011), no. 1-2, 61--78

\bibitem{v}
V.~Valov, \textit{Some characterizations of the spaces with a
lattice of $d$-open mappings}, C. R. Acad. Bulgare Sci \textbf{39}
(1986), no. 9, 9--12.

\end{thebibliography}
\end{document}